\documentclass[12pt]{article}
\PassOptionsToPackage{dvipsnames,svgnames,x11names}{xcolor}
\usepackage{xcolor}
\definecolor{labelkey}{rgb}{0,0.08,0.45}
\definecolor{refkey}{rgb}{0,0.6,0.0}
\definecolor{Brown}{rgb}{0.45,0.0,0.05}
\definecolor{lime}{rgb}{0.00,0.8,0.0}
\definecolor{lblue}{rgb}{0.5,0.5,0.99}
\definecolor{OliveGreen}{rgb}{0,0.6,0}
\definecolor{tyrianpurple}{rgb}{0.4, 0.01, 0.24}
\definecolor{myseagreen}{HTML}{3FBC9D}
\definecolor{myblue}{rgb}{0.9,0.9,0.98}
\colorlet{hlcyan}{cyan!30}

\usepackage{mathpazo} % With old-style figures and real smallcaps
\usepackage{subcaption} % Subfigure
\usepackage[T1]{fontenc}
\usepackage{mathtools}
\usepackage{amssymb}
\usepackage{stmaryrd}
\usepackage{empheq}

\usepackage{hyperref}
\hypersetup{
  colorlinks=true,
  linkcolor=refkey,
  urlcolor=lblue,
  citecolor=magenta
}

\usepackage{amsthm}
\makeatletter
\def\th@plain{%
  \thm@notefont{}%
  \itshape % body font
}
\def\th@definition{%
  \thm@notefont{}%
  \normalfont % body font
}

\usepackage[capitalize,nameinlink]{cleveref}
\makeatother
\newtheorem{theorem}{Theorem}[section]
\newtheorem{lemma}[theorem]{Lemma}

\newtheorem{proposition}[theorem]{Proposition}

\newtheorem{example}[theorem]{Example}
\newtheorem{fact}[theorem]{Fact}
\newtheorem{remark}[theorem]{Remark}
\crefname{theorem}{Theorem}{Theorems}
\Crefname{theorem}{Theorem}{Theorems}
\crefname{fact}{Fact}{facts}
\Crefname{fact}{Fact}{facts}
\crefname{equation}{}{equations}
\crefname{chapter}{Appendix}{chapters}
\crefname{item}{}{items}
\crefname{enumi}{}{}

\usepackage{nicematrix}
\usepackage{booktabs}
\usepackage{siunitx}
\usepackage{tikz}
\usepackage[color=myseagreen]{todonotes}
\usepackage{soul}
\usepackage{nameref}
\usepackage{graphicx}
\usepackage{float}
\usepackage[shortlabels,inline]{enumitem}
\setlist[enumerate]{nosep}
\usepackage{relsize}

\usepackage[margin=1in,footskip=0.4in]{geometry}
\makeatletter

\let\orig@label\label

\renewcommand{\label}[1]{%
  \begingroup
  \def\@currentlabelname{}%
  \ifx\current@theorem\relax\else
    \def\@currentlabelname{\current@theorem}%
  \fi
  \ifx\cref@currentlabel\undefined\else
    \let\@currentlabelname\cref@currentlabel
  \fi
  \orig@label{#1}%
  \endgroup
}

\AtBeginEnvironment{theorem}{\def\current@theorem{theorem}}
\AtBeginEnvironment{proposition}{\def\current@theorem{proposition}}
\AtBeginEnvironment{fact}{\def\current@theorem{fact}}
\makeatother

\newcommand{\seppthree}{\setlength{\itemsep}{-3pt}}

\newcommand{\bx}{\ensuremath{\mathbf{x}}}
\newcommand{\bu}{\ensuremath{\mathbf{u}}}

\newcommand{\bzero}{\ensuremath{\boldsymbol{0}}}

\newcommand{\weakly}{\ensuremath{\:{\rightharpoonup}\:}}

\newcommand{\kkk}{\ensuremath{k\in\mathbb{N}}}
\newcommand{\thalb}{\ensuremath{\tfrac{1}{2}}}
\newcommand{\menge}[2]{\big\{{#1}~\big|~{#2}\big\}}

\newcommand{\To}{\ensuremath{\rightrightarrows}}

\newcommand{\scal}[2]{\left\langle{#1},{#2}\right\rangle}

\newcommand{\RR}{\ensuremath{\mathbb{R}}}

\newcommand{\RP}{\ensuremath{\mathbb{R}_+}}

\newcommand{\NN}{\ensuremath{\mathbb{N}}}

\newcommand{\ran}{\ensuremath{\operatorname{ran}\,}}

\newcommand{\zer}{\ensuremath{\operatorname{zer}}}

\newcommand{\conv}{\ensuremath{\operatorname{conv}\,}}

\newcommand{\cconv}{\ensuremath{\overline{\operatorname{conv}}\,}}

\newcommand{\cspan}{\ensuremath{\overline{\operatorname{span}}\,}}

\newcommand{\Fix}{\ensuremath{\operatorname{Fix}}}
\newcommand{\Id}{\ensuremath{\operatorname{Id}}}

\newcommand{\bX}{\ensuremath{\mathbf{X}}}
\newcommand{\bY}{\ensuremath{\mathbf{Y}}}

\newcommand{\bT}{\ensuremath{\mathbf{T}}}
\newcommand{\bC}{\ensuremath{\mathbf{C}}}
\newcommand{\bN}{\ensuremath{\mathbf{N}}}

\newcommand{\bA}{\ensuremath{\mathbf{A}}}
\newcommand{\bK}{\ensuremath{\mathbf{K}}}

\newcommand{\by}{\ensuremath{\mathbf{y}}}
\newcommand{\bz}{\ensuremath{\mathbf{z}}}

\providecommand{\norm}[1]{\lVert#1\rVert}

\providecommand{\ccone}{\overline{\operatorname{cone}}}

\providecommand{\gra}{\operatorname{gra}}

\newcommand{\cran}{\ensuremath{\overline{\operatorname{ran}}\,}}

\newcommand{\mybluebox}[1]{\colorbox{myblue}{\hspace{1em}#1\hspace{1em}}}

\author{
Heinz H.\ Bauschke\thanks{
Mathematics, University
of British Columbia,
Kelowna, B.C.\ V1V~1V7, Canada. E-mail:
\texttt{heinz.bauschke@ubc.ca}.}~~~and~
Tran Thanh Tung\thanks{
Mathematics, University
of British Columbia,
Kelowna, B.C.\ V1V~1V7, Canada. 
 E-mail: \texttt{tung.tran@ubc.ca}.}
}

\title{\textsf{
    Ces\`aro means of firmly nonexpansive iterates need not 
    converge strongly}
}

\date{June 30, 2026}

\begin{document}
\allowdisplaybreaks
\maketitle

\begin{abstract}
Firmly nonexpansive operators arise naturally as resolvents of monotone operators and as generalizations of projections and proximal mappings in convex optimization and fixed point theory. 
While their iterates are known to converge weakly to a fixed point, strong convergence is not guaranteed (Genel and Lindenstrauss, 1975).
Strong convergence of Ces\`aro means of iterates 
is also known to fail for general nonlinear nonexpansive mappings (Krengel and Lin, 1987).

In this paper, we show that this failure persists in the much smaller 
class of firmly nonexpansive mappings. 
Using suitable meshes, we construct a new explicit family of
counterexamples in infinite-dimensional Hilbert spaces with the origin 
as the unique fixed point.
In the harmonic case, the Ces\`aro
means of the iterates remain bounded away from the origin. 
Another variant yields Ces\`aro means that converge strongly to the origin.
A third variant presents Ces\`aro means whose norms oscillate in the sense that their
liminf is zero while their limsup is positive. 
Thus the strong-convergence conclusion in von Neumann’s 
linear mean ergodic theorem does not extend to 
Baillon’s nonlinear mean ergodic theorem, even 
for firmly nonexpansive mappings.
\end{abstract}

\small
\noindent
{\bfseries 2020 Mathematics Subject Classification:}
{
Primary 47H09, 47H10; 
Secondary 47A35, 47J25, 65K05, 90C25. 
}

\noindent {\bfseries Keywords:} 
Baillon's Nonlinear Mean Ergodic Theorem,
Ces\`aro means, 
firmly nonexpansive mapping, 
Genel--Lindenstrauss example, 
Hilbert space, 
nonexpansive mapping,
strong convergence, 
von Neumann's Linear Mean Ergodic Theorem,
weak convergence. 

\section{Introduction}

\label{s:intro}

Throughout this paper, 
\begin{empheq}[box=\mybluebox]{equation}
\label{e:X}
  \text{$\bX$ is an infinite-dimensional real Hilbert space, 
  }
\end{empheq}
  with inner product $\scal{\cdot}{\cdot}$ and induced norm $\norm{\cdot}$. 
  
Suppose that 
\begin{equation}
\bT\colon \bX\to \bX 
\;\;\text{is nonexpansive, with}
\;
\Fix \bT\neq \varnothing.
\end{equation}
Finding a fixed point of $\bT$ is a central problem in optimization and variational analysis. Unfortunately, without any additional assumptions, 
iterating $\bT$ may not yield a solution (consider $\bT=-\Id$ with a starting point that is not the origin.) 
If, however, $\bT$ is $\alpha$-averaged nonexpansive, i.e., 
$\bT$ can be written 
as $(1-\alpha)\Id+\alpha \bN$, with $\alpha \in\left[0,1\right[$ and 
$\bN\colon \bX\to \bX$ nonexpansive, then the iterates of $\bT$ 
converge weakly to
a point in $\Fix \bT$. An important special case is when $\bT$ is 
firmly nonexpansive, i.e., $\thalb$-averaged nonexpansive. 
An obvious question is whether the convergence can fail to be strong.
The answer is affirmative due to a now-classical example from 1975:

\begin{example}[Genel-Lindenstrauss] {\rm (See \cite{GL}.)}
\label{f:GL}
Suppose that $\bX=\ell^2$. Then there exist 
a bounded closed convex subset $\bC$ of $\bX$, 
a firmly nonexpansive mapping $\bT\colon \bC\to \bC$, and a starting point 
$\bx\in \bC$ such that $\bT^n {\bx}\weakly \bzero\in \Fix \bT$ but 
$\inf_{n\geq 1}\|\bT^n\bx\|\geq \thalb$.
\end{example}

Hence iterating $\bT$, even when $\bT$ is firmly nonexpansive, need not necessarily produce a sequence that converges strongly 
to a fixed point of $\bT$. 
It is thus tempting to consider Ces\`aro means of the iterates.
Indeed, von Neumann's linear mean ergodic theorem gives strong convergence
in the linear case even when $\bT$ is merely assumed to be nonexpansive:

\begin{fact}[von Neumann] 
\label{f:vN}
{\rm (See \cite{vN}, and also \cite{Baillon76}, \cite{RieszNagy43}, \cite[Section~144]{RieszNagy}, and \cite{Wittmann}.)} 
Suppose that $\bT\colon \bX\to\bX$ is linear and nonexpansive. 
Then 
\begin{equation}
\frac{1}{n}\sum_{k=1}^n \bT^{k-1}\bx \to P_{\Fix \bT}\bx. 
\end{equation}
\end{fact}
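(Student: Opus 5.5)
The plan is the classical Riesz--Nagy route: decompose $\bX$ orthogonally into the fixed space and the closure of the range of $\Id-\bT$, and treat the Ces\`aro means separately on each piece. Write $\bA_n := \frac{1}{n}\sum_{k=1}^n \bT^{k-1}$. Each $\bA_n$ is linear with $\norm{\bA_n}\le 1$.

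The first step is to show that $\Fix\bT = \Fix\bT^*$. Suppose $\bT\bx=\bx$. Then
\[
\norm{\bT^*\bx-\bx}^2=\norm{\bT^*\bx}^2-2\scal{\bx}{\bT\bx}+\norm{\bx}^2 \le \norm{\bx}^2-2\norm{\bx}^2+\norm{\bx}^2=0,
\]
where we used $\norm{\bT^*}=\norm{\bT}\le 1$. Hence $\bT^*\bx=\bx$, and the reverse inclusion follows by symmetry. Consequently
\[
(\ran(\Id-\bT))^\perp=\ker(\Id-\bT^*)=\Fix\bT,
\]
which gives the orthogonal decomposition $\bX=\Fix\bT\oplus\cran(\Id-\bT)$.

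The second step handles each component. On $\Fix\bT$ we have $\bA_n\bx=\bx$ for every $n$. On the range, take $\bx=\by-\bT\by$; the sum telescopes to
\[
\bA_n\bx=\tfrac1n(\by-\bT^n\by),
\]
whose norm is at most $2\norm{\by}/n\to0$. For a general $\bz\in\cran(\Id-\bT)$, an $\varepsilon/2$ argument using the uniform bound $\norm{\bA_n}\le1$ gives $\bA_n\bz\to0$.

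Finally, for arbitrary $\bx$, write $\bx=P_{\Fix\bT}\bx+\bz$ with $\bz\in\cran(\Id-\bT)$. By linearity, $\bA_n\bx=P_{\Fix\bT}\bx+\bA_n\bz\to P_{\Fix\bT}\bx$.

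The only real obstacle is the first step, namely identifying the orthogonal complement of the range with $\Fix\bT$, which requires the adjoint fixed-point identity. Everything after that is routine. Note that linearity is used essentially twice: in the telescoping and in splitting $\bx$ into components. This is exactly what fails in the nonlinear setting the paper studies.
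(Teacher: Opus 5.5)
Your proof is correct. The identity $\Fix\bT=\Fix\bT^*$ for contractions, the decomposition $\bX=\Fix\bT\oplus\cran(\Id-\bT)$, the telescoping on $\ran(\Id-\bT)$, and the density argument using $\norm{\bA_n}\le 1$ all go through as written. The paper gives no proof of this fact and only cites the literature; your argument is the classical Riesz--Sz.-Nagy proof found in the cited sources (cf.\ \cite{RieszNagy43} and \cite[Section~144]{RieszNagy}).
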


The nonlinear variant of \cref{f:vN} was provided in 1975 by Baillon:

\begin{fact}[Baillon] 
\label{f:Baillon}
{\rm (See \cite{Baillon75}, and also \cite{Reich78}.)} 
Suppose that $\bT\colon\bX\to\bX$ is nonexpansive, 
with $\Fix \bT\neq\varnothing$. 
Then 
\begin{equation}
\frac{1}{n}\sum_{k=1}^n \bT^{k-1}\bx \weakly 
\text{ some point in $\Fix \bT$.}
\end{equation}
\end{fact}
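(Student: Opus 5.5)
We plan to prove Baillon's theorem directly in the Hilbert space $\bX$. Fix $\bx\in\bX$ and write $\bx_k:=\bT^{k-1}\bx$ and $\bz_n:=\frac1n\sum_{k=1}^n\bx_k$. The argument rests on three ingredients: boundedness of $(\bz_n)$, an asymptotic fixed point property of its weak cluster points, and an Opial-type uniqueness argument based on Fejér monotonicity.

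\emph{Boundedness and cluster points.} Take any $\by\in\Fix\bT$. Nonexpansiveness gives $\|\bx_{k+1}-\by\|\le\|\bx_k-\by\|$, so $(\bx_k)$ is Fejér monotone with respect to $\Fix\bT$. In particular $(\bx_k)$ is bounded, hence so is $(\bz_n)$, and $(\bz_n)$ has weak cluster points. Moreover, for each $\by\in\Fix\bT$ the limit $\lim_k\|\bx_k-\by\|$ exists.

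\emph{Cluster points are fixed points.} This is the key step. Let $\bz_{n_j}\weakly\bz$. For any $\bu\in\bX$, expand
\begin{equation*}
\|\bx_{k+1}-\bT\bu\|^2\le\|\bx_k-\bu\|^2 .
\end{equation*}
Writing $\|\bx_k-\bu\|^2=\|\bx_k-\bz\|^2+2\scal{\bx_k-\bz}{\bz-\bu}+\|\bz-\bu\|^2$, averaging over $k=1,\dots,n$, and using the telescoping of the boundedly varying terms, we aim to obtain
\begin{equation*}
0\le\frac1n\sum_{k=1}^{n}\Bigl(\|\bx_k-\bu\|^2-\|\bx_{k+1}-\bT\bu\|^2\Bigr)
=\frac1n\sum_{k=1}^n\Bigl(\|\bx_k-\bu\|^2-\|\bx_k-\bT\bu\|^2\Bigr)+O(1/n).
\end{equation*}
The $O(1/n)$ term equals $\frac1n(\|\bx_1-\bT\bu\|^2-\|\bx_{n+1}-\bT\bu\|^2)$ and is bounded in absolute value by $\mathrm{const}/n$. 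The quadratic terms $\|\bx_k\|^2$ cancel inside the sum, leaving an expression affine in $\bz_n$:
\begin{equation*}
0\le 2\scal{\bz_n}{\bT\bu-\bu}+\|\bu\|^2-\|\bT\bu\|^2+O(1/n).
\end{equation*}
Passing to the limit along $n_j$ gives $0\le 2\scal{\bz}{\bT\bu-\bu}+\|\bu\|^2-\|\bT\bu\|^2$. Rearranging, this is exactly $\|\bT\bu-\bz\|^2\le\|\bu-\bz\|^2$ for every $\bu\in\bX$. Choosing $\bu=\bT\bz$ gives no immediate contradiction, so instead we choose $\bu=\bz$. This yields $\|\bT\bz-\bz\|^2\le 0$, hence $\bz\in\Fix\bT$. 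This cancellation of quadratic terms is what makes averaging work, and I expect verifying it cleanly to be the main obstacle.

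\emph{Uniqueness of the cluster point.} Suppose $\bz,\bz'\in\Fix\bT$ are two weak cluster points of $(\bz_n)$. Since $\ell(\by):=\lim_k\|\bx_k-\by\|^2$ exists for $\by\in\Fix\bT$, we have
\begin{equation*}
\|\bx_k-\bz\|^2-\|\bx_k-\bz'\|^2=2\scal{\bx_k}{\bz'-\bz}+\|\bz\|^2-\|\bz'\|^2 .
\end{equation*}
This shows that $\scal{\bx_k}{\bz'-\bz}$ converges to some limit $L$. By Cesàro averaging, $\scal{\bz_n}{\bz'-\bz}\to L$ as well. Evaluating this limit along the two subsequences gives $\scal{\bz}{\bz'-\bz}=\scal{\bz'}{\bz'-\bz}$, that is, $\|\bz'-\bz\|^2=0$. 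Hence all weak cluster points coincide.

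Since $(\bz_n)$ is bounded and has a unique weak cluster point, which lies in $\Fix\bT$, the whole sequence converges weakly to that point, as claimed.
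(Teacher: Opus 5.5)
Your proof is correct and complete. The paper does not prove this statement; it quotes it as a known Fact with references to Baillon and Reich, so there is no in-paper argument to match.

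What you wrote is the standard self-contained Hilbert-space argument, in two parts:
\begin{itemize}
\item \emph{Cluster points are fixed points.} The averaged inequality $0\le 2\scal{\bz_n}{\bT\bu-\bu}+\|\bu\|^2-\|\bT\bu\|^2+O(1/n)$ gives $\|\bT\bu-\bz\|\le\|\bu-\bz\|$ for every $\bu$ and every weak cluster point $\bz$. Taking $\bu=\bz$ gives $\bz\in\Fix\bT$.
\item \emph{Uniqueness.} An Opial-type argument uses the existence of $\lim_k\|\bx_k-\by\|$ for $\by\in\Fix\bT$, together with the fact that Ces\`aro averaging preserves limits of scalar sequences.
\end{itemize}

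The step you flagged as the main obstacle is just the identity $\|\bx_k-\bu\|^2-\|\bx_k-\bT\bu\|^2=2\scal{\bx_k}{\bT\bu-\bu}+\|\bu\|^2-\|\bT\bu\|^2$, which you already verified. The expansion of $\|\bx_k-\bu\|^2$ around $\bz$ that precedes it is never used and can be dropped.

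Compared with the cited sources, your route is short and fully elementary. It also works verbatim for $\bT$ defined only on a closed convex set, since weak cluster points of the means stay in that set. What it does not do is identify the limit. The usual form of Baillon's theorem adds that the weak limit equals the strong limit of $P_{\Fix\bT}\bT^n\bx$. Reich's approach via almost convergence yields a stronger, shift-uniform averaging result. Neither refinement is needed for the Fact as stated, which only asserts weak convergence to some point of $\Fix\bT$.
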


For general nonlinear nonexpansive mappings, strong convergence in Baillon’s theorem
is known to fail; see, for example, 
Krengel and Lin’s work \cite{KrengelLin} and Benyamini and Lindenstrauss's \cite[example on page~74]{BL}. 
What appears not to have been settled is whether such failure can occur 
for \emph{firmly} nonexpansive mappings.
\emph{The goal of this work is to settle this question.} 
Our main results can be summarized as follows:

\begin{itemize}[itemsep=00pt]
\item[\bf R1]  We will provide a new family of \emph{firmly} nonexpansive 
mappings $\bT$ such that $(\bT^n\bx)_{n\geq 1}$ converges weakly 
but not strongly 
(see \cref{t:main1}). 
This is similar in spirit to \cref{f:GL}; however, our construction is simpler. 
\item[\bf R2] For an incarnation of $\bT$ from \textbf{R1}, we will show that
even the Ces\`aro means of the iterates fail to converge strongly and 
actually stay
a positive distance away from the unique fixed point of $\bT$ 
(see \cref{ex:harmCes}). 
\item[\bf R3] A second incarnation of $\bT$ from \textbf{R1} yields 
well-behaved Ces\`aro means of the iterates that do converge strongly.
(see \cref{ex:Cnice}). 
\item[\bf R4] We will also present a third incarnation where one 
subsequence of the Ces\`aro means stays away from the unique fixed point of 
$\bT$ while another subsequence converges strongly to the fixed point 
(see \cref{ss:sumbiz}). 
\end{itemize}

The remainder of this paper is organized as follows. 
In \cref{s:motiv}, we provide the motivation for our 
abstract construction while realizations are discussed in 
\cref{s:constr}. 
In \cref{s:aux}, we record a few inequalities that will be useful
in later sections. 
We then introduce the underlying curve and mesh in \cref{s:curvmesh}.
These will play a central role in \cref{s:seq}, where 
we introduce the sequence of vectors that will eventually be the iterates.
\cref{s:main} contains the main results 
\textbf{R1}, \textbf{R2}, \textbf{R3} while \textbf{R4} is presented in \cref{s:bizarre}.

The notation we employ is fairly standard and follows largely \cite{BC2017}. 

\section{Motivation for the construction and the Gaussian kernel}

\label{s:motiv}

Suppose that $\bT\colon \bX\to \bX$ is a firmly nonexpansive 
mapping such that $\Fix \bT = \{\bzero\}$, with 
$\bx_{n} = \bT^{n-1}\bx_1\weakly \bzero$ but $\bx_n\not\to \bzero$.
Because $\bT$ is firmly nonexpansive, we have 
$\|\bT\bx_n -\bT\bzero\|^2 \leq \scal{\bx_n-\bzero}{\bT\bx_n - \bT\bzero}$, i.e., 
\begin{equation}
\label{e:ant1}
\|\bx_{n+1}\|^2 \leq \scal{\bx_n}{\bx_{n+1}}.
\end{equation}
Cauchy--Schwarz now implies that $\|\bx_{n+1}\|\leq\|\bx_n\|$; hence, 
$(\|\bx_n\|)_{n\geq 1}$ is decreasing and thus convergent.
To avoid strong convergence, we must ensure that 
$\lim_{n\to\infty}\|\bx_n\| > 0$.

Now assume that $(\bu(t))_{t\in\RP}$ is a curve of unit vectors in $\bX$ 
that weakly converges to $\bzero$ as $t\to\infty$.
We make the ansatz 
\begin{equation}
\bx_n = \rho_n \bu(t_n),
\end{equation}
where $(t_n)_{n\geq 1}$ increases to $+\infty$ 
and $(\rho_n)_{n\geq 1}$ decreases to a $\rho_\infty>0$.
Then $\bx_n\weakly \bzero$, and 
$\|\bx_n\| = \rho_n \to \rho_\infty > 0$; consequently, 
$\bx_n\not\to \bzero$.

To make progress, we solve \cref{e:ant1} with equality: 
$\|\bx_{n+1}\|^2 = \scal{\bx_n}{\bx_{n+1}}$ turns into 
$\rho_{n+1}^2 = \rho_n \rho_{n+1} \scal{\bu(t_n)}{\bu(t_{n+1})}$ or 
\begin{equation}
\label{e:ant2}
\rho_{n+1} = \rho_n \scal{\bu(t_n)}{\bu(t_{n+1})}. 
\end{equation}
Following the machine learning literature, we can think of 
$\bu(t)$ as a feature map, with kernel $K$: 
\begin{equation}
\label{e:ant3}
\scal{\bu(s)}{\bu(t)} = K(s,t).
\end{equation}
Then \cref{e:ant2} reads 
\begin{equation}
\rho_{n+1} = \rho_n K(t_n,t_{n+1}).
\end{equation}
We want the kernel $K$ to satisfy 
$K(t_n,t_{n+1})<1$ (to model the decrease of $\rho_n$) yet 
\begin{equation}
\prod_{n=1}^\infty K(t_n,t_{n+1}) > 0 \quad\text{(to model $\rho_\infty>0$)}.
\end{equation}
Suppose further that the kernel $K$ is translation invariant, i.e., 
$K(s,t) = \Phi(|s-t|)$ for some function $\Phi\colon\RP\to\RP$. 
Writing $d_n := t_{n+1}-t_n$, we thus want $\sum_{n=1}^\infty d_n = +\infty$ 
(to guarantee $t_n\to\infty$) yet
\begin{equation}
\prod_{n=1}^\infty \Phi(d_n) > 0;
\end{equation}
equivalently, 
\begin{equation}
\sum_{n=1}^\infty -\ln \Phi(d_n) < +\infty.
\end{equation}
The simplest way to achieve this is to assume that 
$-\ln \Phi(d_n) = d_n^2$, i.e., $\Phi(d_n) = \exp(-d_n^2)$, 
with $\sum_{n=1}^\infty d_n^2 < +\infty$ but $\sum_{n=1}^\infty d_n = +\infty$ 
which is possible when we consider the harmonic series.
This leads us to 
$\Phi(d) = \exp(-d^2)$, and so we are led to the Gaussian kernel 
(see also \cite{SHS})
\begin{equation}
K(s,t) = \exp(-(s-t)^2).
\end{equation}
Then \cref{e:ant3} turns into 
\begin{equation}
\label{e:ant4}
\scal{\bu(s)}{\bu(t)} = \exp(-(s-t)^2).
\end{equation}

In \cref{s:constr}, we will present two realizations for $(\bu(t))_{t\in\RP}$ that 
satisfy the inner product condition~\cref{e:ant4}. 

\section{Constructing the curve $\bu$}

\label{s:constr}

In this section, we present two realizations of a curve $\bu$ satisfying the inner product condition the inner product condition~\cref{e:ant4}, i.e., 
\begin{equation}
\scal{\bu(s)}{\bu(t)} = \exp(-(s-t)^2).
\end{equation}

\subsection{A realization based on an orthonormal sequence}
\label{ss:ons}

Suppose that $(\mathbf{e}_k)_{\kkk}$ is an orthonormal sequence 
in $\bX$, and that $(\mathbf{e}_k)_{\kkk}$ is part of an orthonormal basis of $\bX$.
Now define 
\begin{equation}
(\forall t\in\RP)\quad \bu(t) := \sum_{k=0}^\infty u_k(t)\mathbf{e}_k, 
\quad \text{where}~ u_k(t) = \exp(-t^2)\sqrt{\frac{2^k}{k!}}t^k.
\end{equation}
Then 
\begin{align*}
\scal{\bu(s)}{\bu(t)} & = \sum_{k=0}^\infty u_k(s)u_k(t) = 
\sum_{k=0}^\infty \exp(-s^2)\sqrt{\frac{2^k}{k!}}s^k \exp(-t^2)\sqrt{\frac{2^k}{k!}}t^k\\
&= 
\exp(-s^2)\exp(-t^2)\sum_{k=0}^\infty \frac{(2st)^k}{k!}
= \exp(-s^2)\exp(-t^2)\exp(2st)\\
&= \exp(-(s-t)^2),
\end{align*}
as desired. 

The most concrete\footnote{
One can also consider $\bX=L_2[-1,1]$, 
with $\mathbf{e}_k$ being a suitably normalized 
Legendre polynomial of degree $k$ (see \cite[Section~3.7-1]{Kreyszig}), 
or $\bX=L_2[-\pi,\pi]$ with suitably normalized trigonometric functions 
(see \cite[Example~3.4.17]{DM}), 
or $\bX=L_2[0,1]$ with Walsh functions (see \cite[Example~3.4.19]{DM}). 
} version is $\bX=\ell_2$, with $\mathbf{e}_k$ being the $k$th standard
unit vector.

\subsection{A realization in $L^2(\RR)$}

\label{ss:L2}

Suppose that $\bX=L^2(\RR)$, and define $\bu(t)$ by 
\begin{equation}
\bu(t) \colon \RR\to\RR\colon r \mapsto \frac{1}{\sqrt[4]{\pi}}
\exp\Big(-\frac{(r-2t)^2}{2}\Big),
\end{equation}
which is a normalized translated Gaussian, with center at $2t$. 

Using the substitution $u = r - s - t$, one obtains 
\begin{align*}
    \langle \bu(s),\bu(t)\rangle
    &=
    \frac{1}{\sqrt{\pi}}
    \int_{\mathbb R}
    \exp\Big(-\frac{(r-2s)^2}{2}\Big)
    \exp\Big(-\frac{(r-2t)^2}{2}\Big)\,dr  \\
    &=
    \frac{1}{\sqrt{\pi}}
    \int_{\mathbb R}
    \exp\Big(
        -\frac{(r-2s)^2+(r-2t)^2}{2}
    \Big)\,dr\\
    &=
    \frac{1}{\sqrt{\pi}}
    \int_{\mathbb R}
    \exp\big(
        -(r-s-t)^2-(s-t)^2
    \big)\,dr\\
    &= 
    \frac{1}{\sqrt{\pi}}
    \exp(-(s-t)^2)
    \int_{\mathbb R}
    \exp\big(
        -(r-s-t)^2
    \big)\,dr\\
    &=
    \frac{1}{\sqrt{\pi}}
    \exp(-(s-t)^2)
    \int_{\mathbb R}
    \exp\big(
        -u^2
    \big)\,du\\
    &=
    \exp(-(s-t)^2).
\end{align*}

While $\bX=L^2(\RR)$ is separable, this construction 
is more elegant because it does not rely on working with a given orthonormal 
Schauder basis\footnote{An interesting Schauder basis for $L^2(\RR)$ consists of the 
(suitably normalized) Hermite functions \cite[Section~3.7-2]{Kreyszig}.} and the expansion of the exponential function.

\section{Auxiliary results}

\label{s:aux}

We first prove a technical inequality for future use. 

\begin{proposition}
\label{p:ineq}
Let $0 \leq x \leq 1/16$. Then 
\begin{equation}
\exp(2x+16x^2) \leq 1+32x.
\end{equation}
%with equality if and only if $x=0$.
\end{proposition}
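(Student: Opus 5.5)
Since both sides are positive, the inequality is equivalent to $g(x) := \ln(1+32x) - 2x - 16x^2 \geq 0$ on $[0,1/16]$. I would verify $g(0)=0$ and show $g$ is concave on the interval. Then $g$ lies above the chord joining its endpoint values, so $g(x)\ge \min\{g(0),g(1/16)\}$ throughout, and it suffices to check the right endpoint $g(1/16)\geq 0$.

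\emph{Concavity.} The second derivative is
\begin{equation*}
g''(x) = -\frac{32^2}{(1+32x)^2} - 32,
\end{equation*}
which is negative everywhere, so $g$ is concave on $[0,1/16]$. (Alternatively, $g'(x)=32/(1+32x)-2-32x$ is decreasing, and $g'(1/16)=32/3-4>0$. Hence $g'>0$ on the interval, $g$ is increasing, and $g\geq g(0)=0$ directly. This version does not even need the endpoint check.)

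\emph{Endpoint check.} We have
\begin{equation*}
g(1/16)=\ln 3 - \tfrac18 - \tfrac1{16} = \ln 3 - \tfrac{3}{16}>0,
\end{equation*}
because $\ln 3>1$. I would present the monotonicity argument via $g'$ as the main proof, since it is the cleaner of the two.

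The only real obstacle is arithmetic: confirming that $32/(1+32x) \geq 3\cdot\frac{32}{3}\cdot\frac13$-type bounds hold at $x=1/16$, where $1+32x=3$, and that $2+32x\le 4$ there. Both are immediate, so no step should be hard. The constants $1/16$ and $32$ are clearly generous: near $0$ the left side behaves like $1+2x$ and the right like $1+32x$.
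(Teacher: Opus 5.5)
Your proof is correct, but it takes a different route from the paper. You pass to logarithms and do a one-variable calculus check. With $g(x)=\ln(1+32x)-2x-16x^2$ you have $g(0)=0$ and $g'(x)=32/(1+32x)-2-32x$. On $[0,1/16]$ you have $32/(1+32x)\ge 32/3$ and $2+32x\le 4$, so $g'\ge 20/3>0$ directly. That is the clean form of the arithmetic your last paragraph gestures at; the expression $3\cdot\frac{32}{3}\cdot\frac13$ there is garbled and should simply read $32/3$.

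The paper instead avoids logarithms and derivatives entirely. It sets $w:=2x+16x^2\in[0,3/16]$ and bounds $\exp(w)=\sum_n w^n/n!\le\sum_n w^n=1/(1-w)$. It then checks the polynomial inequality $w(1+32x)\le 32x$ (via $80x+512x^2\le 7<30$), which rearranges to $1/(1-w)\le 1+32x$.

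The paper's argument is purely algebraic, needing only a comparison with the geometric series. Yours is shorter and makes the slack transparent: $g'>0$ holds exactly when $(1+32x)(2+32x)<32$, which remains true well beyond $x=1/16$ (where the product is only $12$). Your concavity-plus-endpoint variant is also valid, but it is redundant once $g'>0$ is established.
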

\begin{proof}
Note that 
$80x + 512x^2 \leq 80/16 + 512(1/16)^2 = 7<30$. 
Hence $80x^2+512x^3 \leq 30x$, and so 
\begin{equation}
\label{260515a}
(2x+16x^2)(1+32x) = 2x + 80x^2 + 512x^3 \leq 2x + 30x = 32x.
\end{equation}
Set $w := 2x+16x^2$. 
Then $0\leq w \leq 2/16+16(1/16)^2 = 3/16<1$ and we learn from 
\cref{260515a} that $w(1+32x) \leq 32x$, which we re-arrange to 
the 
\begin{equation}
\label{e:steak1}
\frac{1}{1-w} \leq 1 + 32x.
\end{equation}
On the other hand, since $0\leq w < 1$, we have 
\begin{equation}
\label{e:steak2}
\exp(2x+16x^2) = \exp(w) = \sum_{n=0}^\infty \frac{w^n}{n!} \leq \sum_{n=0}^\infty w^n =
\frac{1}{1-w}. 
\end{equation}
Combining \cref{e:steak1} with \cref{e:steak2}, we obtain the conclusion. 
\end{proof}

The next two results will be useful in later sections. 

\begin{lemma}
\label{p:expexp}
Suppose that two real numbers $x,y$ satisfy 
\begin{equation}
0 \leq x \leq \frac{1}{16}\quad\text{and} \quad 
x+16x^2 \leq y. 
\end{equation}
Then 
\begin{equation}
\exp(x)+\exp(-y) \leq 2. 
\end{equation}
\end{lemma}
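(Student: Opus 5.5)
Since $\exp(-y)$ is decreasing in $y$, I will first reduce to the extremal case $y = x+16x^2$. It then suffices to show
\begin{equation*}
\exp(x) + \exp(-x-16x^2) \leq 2 \qquad (0\leq x\leq 1/16).
\end{equation*}

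Next I multiply through by $\exp(x+16x^2)>0$. The inequality becomes
\begin{equation*}
\exp(2x+16x^2) + 1 \leq 2\exp(x+16x^2),
\end{equation*}
and I will prove this form. The left-hand side is controlled directly by \cref{p:ineq}, which gives $\exp(2x+16x^2)+1 \leq 2+32x$.

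For the right-hand side I use the elementary bound $\exp(z)\geq 1+z$, which gives $2\exp(x+16x^2) \geq 2+2x+32x^2$. Comparing the two sides, this route only works if $2+32x \leq 2+2x+32x^2$, i.e., $30x\leq 32x^2$. That fails for small $x>0$, so this step is the main obstacle: \cref{p:ineq} is too lossy to be paired with a linear lower bound on the right.

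I will therefore not rely on \cref{p:ineq} and instead compare Taylor expansions directly. Set $f(x) := 2\exp(x+16x^2) - \exp(2x+16x^2) - 1$; the goal is $f\geq 0$ on $[0,1/16]$.

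\begin{itemize}
\item At $x=0$ we have $f(0)=0$.
\item To first order, $2(1+x)-(1+2x)-1=0$, so the linear terms cancel.
\item To second order, $\exp(x+16x^2)\approx 1+x+16x^2+x^2/2$, so $2\exp(x+16x^2)$ contributes $33x^2$. Likewise $\exp(2x+16x^2)\approx 1+2x+16x^2+2x^2$, contributing $18x^2$. The net $x^2$-coefficient of $f$ is $33-18=15>0$.
\end{itemize}

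To make this rigorous I factor $f(x)=\exp(16x^2)\bigl(2e^x - e^{2x}\bigr) - 1$. Using $2e^x-e^{2x} = 1-(e^x-1)^2$, this becomes
\begin{equation*}
f(x) = \exp(16x^2)\bigl(1-(e^x-1)^2\bigr) - 1.
\end{equation*}
I then bound each factor from below:
\begin{itemize}
\item $\exp(16x^2)\geq 1+16x^2$;
\item on $[0,1/16]$, $e^x-1\leq x e^{1/16}\leq 1.07x$, so $(e^x-1)^2\leq 1.15x^2$.
\end{itemize}
Hence
\begin{equation*}
f(x)\geq (1+16x^2)(1-1.15x^2)-1 = 14.85x^2 - 18.4x^4 \geq 0,
\end{equation*}
where the last inequality holds because $x^2\leq 1/256$. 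This completes the proof.
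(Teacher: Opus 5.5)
Your proof is correct, but it takes a different route from the paper. The paper sets $g(x):=\exp(x)+\exp(-x-16x^2)$ and computes $g'(x)=\exp(x)-(1+32x)\exp(-x-16x^2)$. It notes that $g'(x)\leq 0$ is equivalent to $\exp(2x+16x^2)\leq 1+32x$, which is exactly \cref{p:ineq}; hence $g$ is decreasing on $[0,1/16]$ and $g(x)\leq g(0)=2$. This also explains why your first attempt stalled: \cref{p:ineq} certifies the sign of a derivative. It therefore yields the lemma only through monotonicity (integration), not by pointwise comparison with a lower bound for $2\exp(x+16x^2)$.

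You instead use the identity $2e^x-e^{2x}=1-(e^x-1)^2$ to write $2\exp(x+16x^2)-\exp(2x+16x^2)-1=\exp(16x^2)\bigl(1-(e^x-1)^2\bigr)-1$, and you bound each factor from below by elementary estimates. Multiplying the two lower bounds is legitimate because $1+16x^2>0$ and $1-1.15x^2>0$, and you should say so.

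Your argument is self-contained: it needs neither \cref{p:ineq} nor any analysis of $g'$. It also gives an explicit margin $14.85x^2-18.4x^4$, and it shows that the constant $16$ has a lot of slack; the same computation works on this interval with, e.g., $2$ in place of $16$. The paper's argument is shorter once \cref{p:ineq} is available, and certifying $g'\leq 0$ is the only role that proposition plays.
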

\begin{proof}
Because $\exp(-\cdot)$ is decreasing, it is clear that 
$\exp(x)+\exp(-y) \leq \exp(x)+\exp(-x-16x^2)$. 
It thus suffices to show that 
\begin{equation*}
g(x) := \exp(x)+\exp(-x-16x^2) \leq 2.
\end{equation*}
Note that $g'(x) = \exp(x) - (1+32x)\exp(-x-16x^2)$.
Hence
\begin{equation*}
g'(x) \leq 0 \Leftrightarrow \exp(2x+16x^2) \leq 1 + 32x,
\end{equation*}
which holds true by \cref{p:ineq}.
Therefore, $g$ is decreasing on $[0,1/16]$, and so $g(x) \leq g(0) = 2$, 
and we're done. 
\end{proof}

\begin{lemma}
\label{l:esumineq}
Let $0\leq s_1 < s_2 <\cdots < s_n$ be real numbers that 
are separated at least by some $\alpha\in\left]0,1\right]$: 
\begin{equation}
(\forall i\in\{1,\ldots,n-1\})\quad s_{i+1}-s_i \geq \alpha.
\end{equation}
Then
\begin{equation}
\max_{i\in\{1,\ldots,n\}} \sum_{j=1}^n \exp\big(-(s_i-s_j)^2\big) 
\leq \frac{1+\sqrt{\pi}}{\alpha}.
\end{equation}
\end{lemma}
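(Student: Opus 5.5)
Fix $i\in\{1,\ldots,n\}$. We split the sum $\sum_{j=1}^n \exp(-(s_i-s_j)^2)$ into three parts: the diagonal term $j=i$, which contributes exactly $1$, the terms with $j>i$, and the terms with $j<i$. The aim is to show that the two one-sided sums together are at most $\sqrt{\pi}/\alpha$. Then the total is at most $1+\sqrt{\pi}/\alpha \le (1+\sqrt{\pi})/\alpha$, where the last inequality uses $\alpha\le 1$. Because the bound does not depend on $i$, taking the maximum over $i$ completes the argument.

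First we locate the points. From the separation hypothesis, telescoping gives $s_j - s_i \geq (j-i)\alpha$ for $j>i$, and likewise $s_i-s_j\geq (i-j)\alpha$ for $j<i$. Since $\exp(-r^2)$ is decreasing in $r\geq 0$, each term satisfies $\exp(-(s_i-s_j)^2)\le \exp(-(k\alpha)^2)$, where $k=|i-j|\geq 1$. Each value of $k$ occurs at most once on each side of $i$. Hence
\begin{equation*}
\sum_{j\neq i}\exp\big(-(s_i-s_j)^2\big) \le 2\sum_{k=1}^\infty \exp\big(-k^2\alpha^2\big).
\end{equation*}

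The remaining step is to compare this series with a Gaussian integral. The function $r\mapsto \exp(-\alpha^2 r^2)$ is decreasing on $\RP$, so for each $k\ge1$ we have $\exp(-k^2\alpha^2)\le \int_{k-1}^{k}\exp(-\alpha^2 r^2)\,dr$. Summing over $k$ gives
\begin{equation*}
\sum_{k=1}^\infty \exp(-k^2\alpha^2)\le \int_0^\infty \exp(-\alpha^2r^2)\,dr = \frac{\sqrt{\pi}}{2\alpha}.
\end{equation*}
Substituting this into the previous display bounds the off-diagonal sum by $\sqrt{\pi}/\alpha$, which is what the first paragraph required.

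None of these steps is a real obstacle. The only points that need care are the integral comparison, which requires monotonicity of the integrand on each interval $[k-1,k]$, and the use of $\alpha\le1$ to absorb the diagonal term into the common factor $1/\alpha$.
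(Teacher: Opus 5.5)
Your proposal is correct and follows essentially the same route as the paper: you bound by $1+2\sum_{k\geq 1}\exp(-(k\alpha)^2)$ using $|s_i-s_j|\geq |i-j|\alpha$, compare the series with the Gaussian integral $\int_0^\infty \exp(-\alpha^2 r^2)\,dr=\sqrt{\pi}/(2\alpha)$, and absorb the diagonal term using $\alpha\leq 1$. The only difference is that you write out the Riemann-sum comparison explicitly via monotonicity on each interval $[k-1,k]$, where the paper only cites it.
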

\begin{proof}
Let $i\in\{1,\ldots,n\}$ be fixed. 
If $j=i\pm k$, then $|s_j - s_i| \geq k\alpha$, and so
\begin{equation}
\sum_{j=1}^n \exp\big(-(s_i-s_j)^2\big) 
\leq 1 + 2\sum_{k=1}^\infty \exp\big(-(k\alpha)^2\big).
\end{equation}
On the other hand, a Riemann-sum argument 
and \cite[Proposition~6.33]{Knapp} show that 
\begin{align}
\sum_{k=1}^\infty \exp\big(-(k\alpha)^2\big) 
&\leq 
\int_{0}^\infty \exp\big(-(t\alpha)^2\big) dt
= \frac{1}{\alpha}\int_{0}^\infty \exp\big(-u^2\big) du
= \frac{\sqrt{\pi}}{2\alpha}.
\end{align}
Altogether, we have 
\begin{equation}
\sum_{j=1}^n \exp\big(-(s_i-s_j)^2\big) 
\leq 1 + \frac{\sqrt{\pi}}{\alpha}
\leq \frac{1+\sqrt{\pi}}{\alpha},
\end{equation}
which implies the conclusion.
\end{proof}

\section{The curve and the mesh}

\label{s:curvmesh}

This section presents properties of the curve and introduces the mesh.
The two objects are central in the construction in the next section. 

\subsection{The curve}

For the rest of this paper, we assume that we are given a curve of 
unit vectors that satisfies 
\begin{empheq}[box=\mybluebox]{equation}
\label{e:Kurve}
\bu \colon \RP \to \bX
\quad\text{and}\quad
(\forall s,t\in\RP)\;\; \scal{\bu(s)}{\bu(t)} = \exp(-(s-t)^2). 
\end{empheq}
See \cref{s:constr} for realizations of such a curve.

\begin{proposition}
We have 
\begin{equation}
\label{e:david5}
\bu(t)\weakly \bzero \quad\text{as}\quad t\to\infty
\end{equation}
and 
\begin{equation}
\label{e:david6}
(\forall s,t \in\RP)\quad \norm{\bu(s)-\bu(t)}^2 = 2\big(1-\exp(-(s-t)^2)\big).
\end{equation}
\end{proposition}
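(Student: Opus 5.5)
The identity \cref{e:david6} is immediate: we expand the square and use \cref{e:Kurve} together with $\|\bu(s)\|^2=\|\bu(t)\|^2=\exp(0)=1$. This gives
\begin{equation*}
\|\bu(s)-\bu(t)\|^2 = \|\bu(s)\|^2 - 2\scal{\bu(s)}{\bu(t)} + \|\bu(t)\|^2 = 2\big(1-\exp(-(s-t)^2)\big).
\end{equation*}

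For \cref{e:david5}, the curve is bounded (all vectors are unit vectors). So it suffices to show $\scal{\bu(t)}{\mathbf{y}}\to 0$ as $t\to\infty$ for every $\mathbf{y}$ in a set whose closed linear span contains the closed linear span $\mathbf{V} := \cspan\{\bu(s)\mid s\in\RP\}$. We may then extend to all $\mathbf{y}\in\bX$ in two steps. First, every $\mathbf{y}\in\mathbf{V}^\perp$ gives $\scal{\bu(t)}{\mathbf{y}}=0$ trivially. Second, we use the standard $\varepsilon/3$ density argument: given $\mathbf{y}\in\mathbf{V}$ and $\varepsilon>0$, we pick a finite linear combination $\mathbf{z}=\sum_{i}\lambda_i\bu(s_i)$ with $\|\mathbf{y}-\mathbf{z}\|<\varepsilon$. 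Then
\begin{equation*}
|\scal{\bu(t)}{\mathbf{y}}|\leq \varepsilon + \sum_i|\lambda_i|\exp(-(t-s_i)^2).
\end{equation*}
The right-hand side is less than $2\varepsilon$ for all large $t$.

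The key computation is therefore just $\scal{\bu(t)}{\bu(s)}=\exp(-(t-s)^2)\to 0$ as $t\to\infty$ for each fixed $s$, which follows directly from \cref{e:Kurve}. Combining this with the orthogonal decomposition $\bX=\mathbf{V}\oplus\mathbf{V}^\perp$ gives weak convergence to $\bzero$.

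There is no real obstacle. The only point requiring care is the density step, namely passing from the spanning vectors to arbitrary $\mathbf{y}$, and it is handled by the uniform bound $\|\bu(t)\|=1$.
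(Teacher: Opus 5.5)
Your proof is correct and follows the paper's argument: expand the square using $\|\bu(s)\|=\|\bu(t)\|=1$ for \cref{e:david6}, and for \cref{e:david5} split $\bX$ into $\cspan\{\bu(s)\}_{s\in\RP}$ and its orthogonal complement, using $\scal{\bu(s)}{\bu(t)}=\exp(-(s-t)^2)\to 0$. Your explicit density step (approximating $\by$ by finite linear combinations and using $\|\bu(t)\|=1$) spells out what the paper leaves implicit when it passes from the spanning vectors to every $\by$ in the closed span.
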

\begin{proof}
Set $\bY := \cspan\{\bu(t)\}_{t\in\RP}$, 
and let $\by\in \bY$ and $\bz\in \bY^\perp$. 
For fixed $s\in\RP$, we have $\lim_{t\to\infty} \scal{\bu(s)}{\bu(t)} = 
\lim_{t\to\infty} \exp(-(s-t)^2) = 0$. 
This implies that $\scal{\by}{\bu(t)} \to 0$ as $t\to\infty$, 
while $\scal{\bz}{\bu(t)}\equiv 0$. Thus 
$\scal{\by+\bz}{\bu(t)} \to 0$ as $t\to\infty$, and 
\cref{e:david5} follows.
Now let $s,t$ be in $\RP$. 
Then 
$\norm{\bu(s)-\bu(t)}^2 = \norm{\bu(s)}^2 + \norm{\bu(t)}^2 - 2\scal{\bu(s)}{\bu(t)} = 2(1-\exp(-(s-t)^2))$.
\end{proof}

\begin{remark}
It is clear from \cref{e:david6} that the curve $\bu$ is continuous. 
One can show\footnote{We omit the details because these formulas are not used
in the rest of the paper.}
 that $\bu$ is even continuously differentiable with 
\begin{equation}
\scal{\bu'(t)}{\bu(t)} = 0, 
\;\;
\scal{\bu'(s)}{\bu'(t)} = \big(2-4(s-t)^2\big)\exp(-(s-t)^2),
\;\;
\|\bu'(t)\| = \sqrt{2},
\end{equation}
and 
\begin{equation}
\|\bu'(s)-\bu'(t)\|^2 = 4-2\big(2-4(s-t)^2\big)\exp(-(s-t)^2).
\end{equation}
In the context of \cref{ss:ons}, we have 
\begin{equation}
\bu'(t) := \sum_{k=0}^\infty v_k(t)\mathbf{e}_k, 
\quad \text{where}~ v_k(t) = \exp(-t^2)\sqrt{\frac{2^k}{k!}}\big( kt^{k-1}-2t^{k+1} \big),
\end{equation}
where for $k=0$ we set $kt^{k-1} := 0$.
In the context of \cref{ss:L2}, we have
\begin{equation}
\bu'(t) \colon \RR\to\RR\colon r \mapsto \frac{1}{\sqrt[4]{\pi}}
2(r-2t)
\exp\Big(-\frac{(r-2t)^2}{2}\Big). 
\end{equation}
\end{remark}

\subsection{The mesh}

From now on, we assume that $(d_n)_{n\geq 1}$ is a fixed sequence of 
\emph{step sizes}, or \emph{mesh increments}, such that 
\begin{empheq}[box=\mybluebox]{equation}
\label{e:dmesh00}
(\forall n\geq 1)\quad d_n>0, \;\; 
\sum_{k=1}^\infty d_k^2 < +\infty, \;\;
\text{and}\;\;
t_n := \sum_{k=1}^{n-1} d_k \to +\infty. 
\end{empheq}
Hence $(d_n)_{n\geq 1} \in \ell^2\smallsetminus\ell^1$, 
$t_1 = 0$, and $(t_n)_{n\geq 1}$ is the \emph{mesh sequence} 
we place over the nonnegative reals. 
We will use repeatedly that
\begin{equation}
\label{e:tndn}
(\forall n\geq 1)\quad t_{n+1} - t_n = d_n.  
\end{equation}
Note that we could also have started with the mesh sequence $(t_n)_{n\geq 1}$, 
and obtained the step sizes via $d_n := t_{n+1}-t_n$. 
We will also assume from now on that 
\begin{subequations}
\label{e:dmesh11}
\begin{empheq}[box=\mybluebox]{equation}
\label{e:dmesh0}
d_1 \leq \tfrac{1}{8}
\end{empheq}
and 
\begin{empheq}[box=\mybluebox]{equation}
\label{e:dmeshnew}
(\forall n\geq 1)\quad 
d_{n+1} \leq \frac{d_n}{1+64d_n^2}.
\end{empheq}
\end{subequations}
Consequently, $(d_n)_{n\geq 1}$ is strictly decreasing.
Note that \cref{e:dmeshnew} is equivalent to 
\begin{equation}
\label{e:dmeshold}
(\forall n\geq 1)\quad \frac{1}{d_{n+1}}-\frac{1}{d_n} \geq 64d_n = 64(t_{n+1}-t_n),
\end{equation}
as well as to 
\begin{equation}
\label{e:dntninc}
(\forall n\geq 1)\quad \frac{1}{d_{n}} - 64t_n \leq \frac{1}{d_{n+1}} - 64t_{n+1};
\quad\text{consequently,
$\Big(\frac{1}{d_n} - 64t_n\Big)_{n\geq 1}$ is increasing}.
\end{equation}
Moreover, if one of the inequalities in 
\cref{e:dmeshnew},\cref{e:dmeshold},\cref{e:dntninc} is an equality, 
then so are the other two.

\begin{example}[harmonic mesh]
\label{ex:harmesh}
Let $\delta \in \left]0,1/8\right]$ and suppose that 
\begin{equation}
(\forall n\geq 1)\quad d_n = \frac{\delta}{n}. 
\end{equation}
Then \cref{e:dmesh00} and \cref{e:dmesh11} hold. 
\end{example}
\begin{proof}
Clearly, each $d_n>0$, 
$\sum_{k=1}^\infty d_k^2 = {\delta^2}\sum_{k=1}^\infty \frac{1}{k^2} < +\infty$ 
while $t_n=\delta H_{n-1}\to +\infty$ where 
$H_n$ denotes the $n$th harmonic number. 
Hence \cref{e:dmesh00} holds.
It is clear that \cref{e:dmesh0} holds.
Now let $n\geq 1$. 
Then 
\begin{align*}
\frac{1}{d_{n+1}}-\frac{1}{d_n}
&= \frac{n+1}{\delta} - \frac{n}{\delta} = \frac{1}{\delta} 
\geq 64 \delta \geq 64 \cdot \frac{\delta}{n} = 64d_n.  
\end{align*}
Hence \cref{e:dmeshold} holds, and so does 
the equivalent \cref{e:dmeshnew}. 
\end{proof}

\begin{lemma}
The following hold:
\begin{equation}
\label{e:dmesh1}
(\forall 1\leq m<n)\quad \frac{1}{d_n}-\frac{1}{d_m} \geq 64(t_n-t_m), 
\end{equation}
\begin{equation}
\label{e:dmesh2}
(\forall n\geq 1)\quad d_nt_{n+1}  < \frac{1}{32},
\end{equation}
\begin{equation}
\label{e:meshcool}
(\forall 1\leq m<n)\quad 
\exp\big(2d_n(t_{n+1}-t_{m+1})\big)+ \exp\big(-2d_m(t_{n+1}-t_{m+1})\big) \leq 2.
\end{equation}
\end{lemma}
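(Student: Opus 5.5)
The three claims will be proved in order, each building on the previous one.

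For \cref{e:dmesh1}, I telescope \cref{e:dmeshold}. For $1\leq m<n$, summing $\frac{1}{d_{k+1}}-\frac{1}{d_k}\geq 64(t_{k+1}-t_k)$ over $k=m,\ldots,n-1$ gives $\frac{1}{d_n}-\frac{1}{d_m}\geq 64(t_n-t_m)$. Equivalently, this is the monotonicity statement \cref{e:dntninc}.

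For \cref{e:dmesh2}, I apply \cref{e:dmesh1} with $m=1$, using $t_1=0$ and \cref{e:dmesh0}. This gives $\frac{1}{d_n}\geq \frac{1}{d_1}+64t_n\geq 8+64t_n$ for all $n\geq 1$ (trivially also for $n=1$). Hence $d_n(8+64t_n)\leq 1$, that is, $d_nt_n\leq \frac{1-8d_n}{64}$. Since $(d_n)$ is decreasing, $d_n\leq d_1\leq \frac18$. Using \cref{e:tndn},
\begin{equation*}
d_nt_{n+1}=d_nt_n+d_n^2\leq \tfrac{1}{64}-\tfrac{d_n}{8}+d_n^2=\tfrac{1}{64}-d_n\big(\tfrac18-d_n\big)\leq \tfrac{1}{64}<\tfrac{1}{32}.
\end{equation*}

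For \cref{e:meshcool}, fix $1\leq m<n$ and set $D:=t_{n+1}-t_{m+1}>0$. I will apply \cref{p:expexp} with $x:=2d_nD$ and $y:=2d_mD$, so two hypotheses must be checked.

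First, $0\leq x\leq 1/16$. This follows from $D\leq t_{n+1}$ and \cref{e:dmesh2}: $x\leq 2d_nt_{n+1}<\frac{1}{16}$.

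Second, $x+16x^2\leq y$, which is the main step. It is equivalent to $2D(d_m-d_n)\geq 64d_n^2D^2$, i.e. to $d_m-d_n\geq 32d_n^2D$. Multiplying \cref{e:dmesh1} by $d_md_n$ gives $d_m-d_n\geq 64d_md_n(t_n-t_m)$. It therefore suffices to show $2d_m(t_n-t_m)\geq d_nD$. This holds because $d_m\geq d_n$ and, since $(d_k)$ is decreasing,
\begin{equation*}
D=t_n-t_m+d_n-d_m\leq t_n-t_m .
\end{equation*}

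With both hypotheses verified, \cref{p:expexp} yields $\exp(x)+\exp(-y)\leq 2$, which is exactly \cref{e:meshcool}. The only delicate part is matching the quadratic slack $16x^2$ to the gain supplied by \cref{e:dmesh1}. The constant $64$ in \cref{e:dmeshnew} leaves a factor of $2$ to spare there.
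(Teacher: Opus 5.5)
Your proof is correct and takes the same route as the paper: you telescope \cref{e:dmeshold}, use the $m=1$ case with $d_1\le 1/8$ for \cref{e:dmesh2}, and apply \cref{p:expexp} with $x=2d_nD$ and $y=2d_mD$, relying on $D\le t_n-t_m$ and $d_n\le d_m$. The only difference is cosmetic: you obtain the slightly sharper bound $d_nt_{n+1}\le 1/64$ in \cref{e:dmesh2}.
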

\begin{proof}
\cref{e:dmesh1}: 
Let $1\leq m < n$. 
Then \cref{e:dmeshold}, which is equivalent to \cref{e:dmeshnew}, implies that
\begin{align*}
64(t_n-t_m)&= \sum_{k=m}^{n-1} 64d_k 
\leq \sum_{k=m}^{n-1} \Big(\frac{1}{d_{k+1}}-\frac{1}{d_{k}}\Big)
= \frac{1}{d_n}-\frac{1}{d_m}.  
\end{align*}

\cref{e:dmesh2}: 
Let $n\geq 1$.  
From \cref{e:dntninc}, which is equivalent to \cref{e:dmeshnew}, we know that 
$(1/d_n - 64t_n)_{n\geq 1}$ is increasing. 
Recalling \cref{e:dmesh0}, we have in particular that
$8 \leq 1/d_1 -0 = 1/d_1 -64t_1 \leq 1/d_n -64t_n$. 
So $64t_n<1/d_n$ and $8\leq 1/d_n$. Therefore, 
\begin{equation*}
d_nt_{n+1} = d_n(t_n + d_n) 
= d_nt_n + d_n^2
< \frac{1}{64} + \frac{1}{8^2} = \frac{1}{32}. 
\end{equation*}

\cref{e:meshcool}: 
Let $1\leq m<n$ and abbreviate $\delta := t_{n+1}-t_{m+1}<t_{n+1}$. 
Then \cref{e:dmeshnew} implies that
\begin{equation}
\label{e:260516c} 
d_n<d_m; 
\end{equation}
thus, 
\begin{equation*}
\delta = t_{n+1}-t_{m+1} = t_n-t_m+(t_{n+1}-t_n) - (t_{m+1}-t_m) 
= t_n-t_m+d_n - d_m < t_n-t_m. 
\end{equation*}
Combining this with \cref{e:dmesh1} yields
\begin{equation}
\label{e:260516b}
\frac{1}{d_n}-\frac{1}{d_m} > 64\delta.
\end{equation}
Now set 
\begin{equation*}
\xi := 2d_n\delta \quad\text{and}\quad \eta := 2d_m\delta.
\end{equation*}
Because $\delta<t_{n+1}$ and \cref{e:dmesh2} hold, we have 
\begin{subequations}
\label{e:260516dd}
\begin{equation}
0 < \xi < 2d_nt_{n+1} \leq \frac{1}{16}.
\end{equation}
Using \cref{e:260516c} and \cref{e:260516b}, we estimate 
\begin{align}
\eta-\xi 
&= 2\delta(d_m-d_n) = 2\delta d_nd_m\Big(\frac{1}{d_n}-\frac{1}{d_m}\Big)
> 2\delta d_n^2 \cdot 64\delta = 32 \cdot 4d_n^2\delta^2 = 32\xi^2 
> 16\xi^2.
\end{align}
\end{subequations} 
Combining \cref{e:260516dd} with \cref{p:expexp} yields 
$\exp(\xi) + \exp(-\eta) \leq 2$, as announced. 
\end{proof}

\section{The sequence}

\label{s:seq}

As explained in \cref{s:motiv}, 
we now define the sequence of scalars $(\rho_n)_{n\geq 1}$ by 
\begin{empheq}[box=\mybluebox]{equation}
\label{e:rho}
\rho_1 := 1 \;\;\text{and}\;\;
\rho_{n+1} := \rho_n\exp(-d_n^2).
\end{empheq}

\begin{proposition}
\label{p:rho}
The sequence $(\rho_n)_{n\geq 1}$ satisfies 
\begin{equation}
\label{e:rho1}
(\forall n\geq 1)\quad \rho_n = \exp\Big(-\sum_{k=1}^{n-1} d_k^2\Big);
\end{equation}
moreover, 
\begin{empheq}[box=\mybluebox]{equation}
\label{e:rho2}
\rho_n > \rho_{n+1}\to \rho_\infty := \exp\Big(-\sum_{k=1}^\infty d_k^2\Big) > 0
\;\;\text{ as } n\to\infty.
\end{empheq}
\end{proposition}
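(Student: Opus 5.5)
The closed form \cref{e:rho1} comes from induction on $n$, using the recursion \cref{e:rho} and the functional equation of the exponential. For $n=1$ the sum is empty, so the right-hand side is $\exp(0)=1=\rho_1$. If \cref{e:rho1} holds for some $n\geq 1$, then
\begin{equation*}
\rho_{n+1}=\rho_n\exp(-d_n^2)=\exp\Big(-\sum_{k=1}^{n-1}d_k^2\Big)\exp(-d_n^2)=\exp\Big(-\sum_{k=1}^{n}d_k^2\Big),
\end{equation*}
which is \cref{e:rho1} for $n+1$.

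For the strict decrease, each $d_n>0$ by \cref{e:dmesh00}, so $\exp(-d_n^2)<1$. Also $\rho_n>0$, which is evident from \cref{e:rho1} since it is an exponential. Hence $\rho_{n+1}=\rho_n\exp(-d_n^2)<\rho_n$.

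For the limit, the partial sums $S_{n-1}:=\sum_{k=1}^{n-1}d_k^2$ increase to $S:=\sum_{k=1}^\infty d_k^2$, and $S<+\infty$ by the standing assumption \cref{e:dmesh00}. The map $x\mapsto\exp(-x)$ is continuous on $\RR$, so by \cref{e:rho1}
\begin{equation*}
\rho_n=\exp(-S_{n-1})\to\exp(-S)=\rho_\infty .
\end{equation*}
Finally, $\rho_\infty>0$ because the exponential of a finite real number is positive.

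No step here is a real obstacle. The only point to state explicitly is that square-summability of the step sizes is exactly what makes the limit $\rho_\infty$ strictly positive.
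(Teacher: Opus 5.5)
Your proof is correct and takes essentially the same approach as the paper: induction for the closed form \cref{e:rho1}, strict decrease from $d_n>0$, and a positive limit $\rho_\infty$ from $\sum_k d_k^2<+\infty$. You are slightly more explicit than the paper, noting that $\rho_n>0$ is needed for the strict inequality and citing the continuity of $\exp$ for the limit.
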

\begin{proof}
Because each $d_n>0$, it is clear that $\rho_{n+1}<\rho_n$. 
The identity \cref{e:rho1} follows by induction. 
Since $\sum_{k=1}^\infty d_k^2 < +\infty$, we have $\rho_\infty > 0$ and $\rho_n \to \rho_\infty$ as $n\to\infty$.
\end{proof}

We are now ready to define the key sequence $(\bx_n)_{n\geq 1}$ in this paper, 
namely 
\begin{empheq}[box=\mybluebox]{equation}
\label{xn}
(\forall n\geq 1)\quad \bx_n := \rho_n \bu(t_n) \in \bX. 
\end{empheq}
We also consider the smallest closed convex cone containing the 
corresponding set $\{\bx_n\}_{n\geq 1}$: 
\begin{empheq}[box=\mybluebox]{equation}
\label{bK}
\bK := \ccone\conv\{\bx_n\}_{n\geq 1} = 
\cconv \RP \{\bx_n\}_{n\geq 1} \subseteq \bX.
\end{empheq}

\begin{proposition}
\label{kurve}
We have 
\begin{equation}
\label{e:kurve}
\bx_n \weakly \bzero \;\;\text{and}\;\; \norm{\bx_n} =\rho_n \to \rho_\infty > 0. 
\end{equation}
Let $1\leq m\leq n$. Then the following hold: 
\begin{enumerate}[label=(\roman*), font=\normalfont]
\item 
\label{kurve1}
$\scal{\bx_m}{\bx_n} = \rho_m\rho_n\exp(-(t_n-t_m)^2)$ 
and $\|\bx_n\|=\rho_n > \rho_{n+1} = \|\bx_{n+1}\|$. 
\item 
\label{kurve4}
$\scal{\bK}{\bK} \geq 0$, and so $\bK$ is an acute cone, i.e., 
$\bK \subseteq \bK^{\oplus}$.
\item 
\label{kurve2}
$\scal{\bx_{n+1}}{\bx_n} = \|\bx_{n+1}\|^2$. 
\item 
\label{kurve3}
$\scal{\bx_{n+1}-\bx_n}{\bx_{n+1}} = 0$.
\end{enumerate}
\end{proposition}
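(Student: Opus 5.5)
The proof is a direct computation from the defining properties \cref{e:Kurve}, \cref{e:rho}, \cref{e:tndn} and \cref{e:rho2}, carried out in the listed order.

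\textbf{Weak convergence and norms.} For \cref{e:kurve}, I will use that $\norm{\bu(t_n)}=1$, which is \cref{e:Kurve} with $s=t$. This gives $\norm{\bx_n}=\rho_n$, and $\rho_n\to\rho_\infty>0$ by \cref{e:rho2}. Since $t_n\to+\infty$ by \cref{e:dmesh00}, \cref{e:david5} yields $\bu(t_n)\weakly\bzero$. Because $(\rho_n)$ is bounded (indeed $0<\rho_n\leq 1$), it follows that $\bx_n=\rho_n\bu(t_n)\weakly\bzero$.

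\textbf{Item~(i).} By bilinearity and \cref{e:Kurve},
\begin{equation*}
\scal{\bx_m}{\bx_n}=\rho_m\rho_n\scal{\bu(t_m)}{\bu(t_n)}=\rho_m\rho_n\exp\big(-(t_n-t_m)^2\big).
\end{equation*}
The strict norm decrease $\rho_n>\rho_{n+1}$ is part of \cref{e:rho2}.

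\textbf{Item~(ii).} Every inner product $\scal{\bx_m}{\bx_n}$ is a product of positive numbers, so it is $\geq 0$. Let $\bK_0:=\RP\{\bx_n\}_{n\geq1}$ and take finite nonnegative combinations $\sum_i\alpha_i\bx_{m_i}$ and $\sum_j\beta_j\bx_{n_j}$. Their inner product is $\sum_{i,j}\alpha_i\beta_j\scal{\bx_{m_i}}{\bx_{n_j}}\geq0$, so $\scal{\conv\bK_0}{\conv\bK_0}\geq0$. To pass to $\bK$, which is the closure of $\conv\bK_0$, I will use continuity of the inner product in both arguments: limits of nonnegative numbers remain nonnegative. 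Hence $\scal{\bK}{\bK}\geq0$, which is precisely $\bK\subseteq\bK^\oplus$. This closure step is the only point requiring any care, and it is routine.

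\textbf{Items~(iii) and (iv).} Applying~(i) with $m=n$ and $n+1$, and using \cref{e:tndn} and \cref{e:rho},
\begin{equation*}
\scal{\bx_{n+1}}{\bx_n}=\rho_n\rho_{n+1}\exp(-d_n^2)=\rho_{n+1}\cdot\rho_n\exp(-d_n^2)=\rho_{n+1}^2=\norm{\bx_{n+1}}^2 .
\end{equation*}
Item~(iv) is then an algebraic rearrangement of~(iii): $\scal{\bx_{n+1}-\bx_n}{\bx_{n+1}}=\norm{\bx_{n+1}}^2-\scal{\bx_n}{\bx_{n+1}}=0$.
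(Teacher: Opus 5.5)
Your proposal is correct and follows the paper's proof essentially line for line. The weak limit and the norms come from $\bu(t)\weakly\bzero$ together with $\rho_n\to\rho_\infty$, and item (i) comes from the kernel identity. Item (ii) follows from the nonnegativity of the inner products in (i), and items (iii)--(iv) follow from $\rho_n\exp(-d_n^2)=\rho_{n+1}$. You also spell out two details the paper leaves implicit: the boundedness of $(\rho_n)$ used for the weak limit, and the continuity argument for passing from finite nonnegative combinations to the closure $\bK$.
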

\begin{proof}
\cref{e:kurve} follows from 
\cref{xn}, \cref{e:david5}, and \cref{e:rho2}.
\cref{kurve1}: Clear from \cref{xn} and \cref{e:Kurve}. 
\cref{kurve4}: 
This follows from \cref{kurve1}. 
\cref{kurve2}: 
Indeed, \cref{kurve1} and \cref{e:rho} yield 
\begin{equation*}
\scal{\bx_{n+1}}{\bx_n} = \rho_{n+1}\rho_n\exp(-(t_{n+1}-t_n)^2) = \rho_{n+1}\rho_n\exp(-d_n^2)
=\rho_{n+1}^2   = \|\bx_{n+1}\|^2.  
\end{equation*}
\cref{kurve3}: \cref{kurve2} implies 
$\scal{\bx_{n+1}-\bx_n}{\bx_{n+1}} = \scal{\bx_{n+1}}{\bx_{n+1}} - \scal{\bx_n}{\bx_{n+1}} = 
\|\bx_{n+1}\|^2 - \|\bx_{n+1}\|^2 = 0$. 
\end{proof}

For later convenience, and also motivated by \cref{e:kurve}, 
we denote the weak limit of $(\bx_n)_{n\geq 1}$ by 
\begin{empheq}[box=\mybluebox]{equation}
\label{e:bxinf}
\bx_\infty := \bx_{\infty+1} := \bzero\in \bX. 
\end{empheq}

\begin{lemma}
\label{l:schap}
%Suppose that \cref{e:meshcool} holds. 
We have 
\begin{equation}
\label{e:schap}
(\forall 1\leq m < n \leq \infty)\quad
0\leq \scal{\bx_{m+1}-\bx_{n+1}}{(\bx_{m}-\bx_{m+1})-(\bx_{n}-\bx_{n+1})}. 
\end{equation}
\end{lemma}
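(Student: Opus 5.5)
The plan is to expand the inner product in \cref{e:schap}, use \cref{kurve}\cref{kurve3} to discard the ``diagonal'' terms, and reduce what is left, via the explicit formula in \cref{kurve}\cref{kurve1} and the recursion \cref{e:rho}, to the mesh inequality \cref{e:meshcool}. We treat the cases $n=\infty$ and $n<\infty$ separately.

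\emph{Case $n=\infty$.} By \cref{e:bxinf}, $\bx_n=\bx_{n+1}=\bzero$. The right-hand side of \cref{e:schap} is then $\scal{\bx_{m+1}}{\bx_m-\bx_{m+1}}$. This equals $0$ by \cref{kurve}\cref{kurve3}, so the inequality holds.

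\emph{Case $m<n<\infty$.} Abbreviate $\mathbf{a}:=\bx_{m+1}$ and $\mathbf{b}:=\bx_{n+1}$. Expanding bilinearly, the expression in \cref{e:schap} is
\begin{equation*}
\scal{\mathbf{a}}{\bx_m-\mathbf{a}} - \scal{\mathbf{a}}{\bx_n-\mathbf{b}} - \scal{\mathbf{b}}{\bx_m-\mathbf{a}} + \scal{\mathbf{b}}{\bx_n-\mathbf{b}}.
\end{equation*}
The first and last terms vanish by \cref{kurve}\cref{kurve3}. Hence the claim is equivalent to
\begin{equation*}
\scal{\bx_{m+1}}{\bx_n}+\scal{\bx_{n+1}}{\bx_m}\leq 2\scal{\bx_{m+1}}{\bx_{n+1}}.
\end{equation*}

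We now make this explicit using \cref{kurve}\cref{kurve1}. Set $\delta:=t_{n+1}-t_{m+1}>0$. By \cref{e:tndn},
\begin{equation*}
t_n-t_{m+1}=\delta-d_n \quad\text{and}\quad t_{n+1}-t_m=\delta+d_m .
\end{equation*}
By \cref{e:rho},
\begin{equation*}
\rho_n=\rho_{n+1}\exp(d_n^2) \quad\text{and}\quad \rho_m=\rho_{m+1}\exp(d_m^2).
\end{equation*}
Substituting these and dividing by $\rho_{m+1}\rho_{n+1}\exp(-\delta^2)>0$, the desired inequality becomes
\begin{equation*}
\exp\big(d_n^2-(\delta-d_n)^2+\delta^2\big)+\exp\big(d_m^2-(\delta+d_m)^2+\delta^2\big)\leq 2.
\end{equation*}
The exponents simplify to $2d_n\delta$ and $-2d_m\delta$. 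So we need
\begin{equation*}
\exp\big(2d_n(t_{n+1}-t_{m+1})\big)+\exp\big(-2d_m(t_{n+1}-t_{m+1})\big)\leq 2,
\end{equation*}
which is exactly \cref{e:meshcool}. The subcase $n=m+1$ needs no separate treatment: there $t_n-t_{m+1}=0$ is consistent with $\delta-d_n=0$.

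The only real work is the bookkeeping that turns the $\rho$'s and the squared mesh differences into the two exponentials of \cref{e:meshcool}. The analytic difficulty has already been absorbed into \cref{e:meshcool}, via \cref{p:expexp}, so I expect no genuine obstacle beyond carrying out this algebra carefully.
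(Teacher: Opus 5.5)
Your proposal is correct and matches the paper's proof: you use the same case split, the same use of \cref{kurve}\cref{kurve3} to reduce to $\scal{\bx_{m+1}}{\bx_n}+\scal{\bx_{n+1}}{\bx_m}\leq 2\scal{\bx_{m+1}}{\bx_{n+1}}$, and the same reduction to \cref{e:meshcool}. Your divisor $\rho_{m+1}\rho_{n+1}\exp(-\delta^2)$ is exactly the paper's combined divisor $\rho_m\rho_n\exp(-d_m^2-d_n^2-\delta^2)$, written differently.
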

\begin{proof}
Let $1\leq m < n \leq \infty$.
We will argue by cases. In both cases, we will work backward from the desired conclusion \cref{e:schap} 
by reversible algebraic manipulations.

\emph{Case~1:} $n=\infty$. 
Then $\bx_{n+1} = \bx_\infty = \bzero$ (recall \cref{e:bxinf}), 
and so 
\cref{e:schap} reduces to 
$0\leq\scal{\bx_{m+1}}{\bx_{m}-\bx_{m+1}}$, which holds 
with equality by \cref{kurve}\cref{kurve3}.

\emph{Case~2:} $n<\infty$. 
Using again \cref{kurve}\cref{kurve3}, we see that \cref{e:schap} is equivalent to
\begin{equation*}
0 \leq - \scal{\bx_{m+1}}{\bx_{n}-\bx_{n+1}} - \scal{\bx_{n+1}}{\bx_{m}-\bx_{m+1}},
\end{equation*}
and hence also to 
\begin{equation*}
0 \leq 2\scal{\bx_{m+1}}{\bx_{n+1}} - \scal{\bx_{m+1}}{\bx_n} - \scal{\bx_{n+1}}{\bx_m}.
\end{equation*}
In view of \cref{kurve}\cref{kurve1}, this is equivalent to
\begin{equation*}
0 \leq 2\rho_{m+1}\rho_{n+1}\exp(-(t_{n+1}-t_{m+1})^2) - \rho_{m+1}\rho_n\exp(-(t_n-t_{m+1})^2) - \rho_{n+1}\rho_m\exp(-(t_{n+1}-t_m)^2).
\end{equation*}
In turn, using \cref{e:rho}, we can rewrite this condition as 
\begin{subequations}
\label{e:david1}
\begin{align}
0 &\leq 
2\rho_{m}\exp(-d_m^2)\cdot \rho_{n}\exp(-d_n^2)\cdot\exp(-(t_{n+1}-t_{m+1})^2) \\
&\quad - \rho_{m}\exp(-d_m^2)\cdot \rho_n\cdot \exp(-(t_n-t_{m+1})^2) \\
&\quad - \rho_{n}\exp(-d_n^2)\cdot \rho_m\cdot \exp(-(t_{n+1}-t_m)^2). 
\end{align}
\end{subequations}
Abbreviate $\delta := t_{n+1}-t_{m+1}$ and 
recall that $t_{n+1} = t_n + d_n$ (see \cref{e:tndn}). 
Then $t_n-t_{m+1} = \delta - d_n$ and 
$t_{n+1}-t_m = \delta + d_m$. 
Hence \cref{e:david1} is equivalent to
\begin{equation*}
0 \leq \rho_m\rho_n
\big(2\exp(-d_m^2-d_n^2-\delta^2) - \exp(-d_m^2-(\delta-d_n)^2) 
- \exp(-d_n^2-(\delta+d_m)^2)\big), 
\end{equation*}
and, after dividing by $\rho_m\rho_n>0$ and expanding the squares, to 
\begin{align*}
0 
&\leq 
2\exp(-d_m^2-d_n^2-\delta^2) - \exp(-d_m^2-\delta^2 + 2d_n\delta-d_n^2)
-\exp(-d_n^2-\delta^2 - 2d_m\delta-d_m^2) \\
&= \exp(-d_m^2-d_n^2-\delta^2)
\big(2 - \exp(2d_n\delta) - \exp(-2d_m\delta)\big).
\end{align*}
Dividing by $\exp(-d_m^2-d_n^2-\delta^2)>0$, we now face 
\begin{equation*}
0 \leq 2 - \exp(2d_n\delta) - \exp(-2d_m\delta);
\end{equation*}
however, this is just a re-arrangement of \cref{e:meshcool}. 
\end{proof}

\begin{lemma}
\label{l:david2}
Let $\by\in \bK$, and suppose that 
\begin{align}
\label{e:david3}
(\forall 1\leq n< \infty)\quad 
0 \leq \scal{\bx_{n+1}-\by}{\bx_{n}-\bx_{n+1}}.
\end{align}
Then $\by=\bzero$. 
\end{lemma}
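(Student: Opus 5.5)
The plan is to reduce the hypothesis \cref{e:david3} to a sign condition on $\scal{\by}{\bx_m}$, combine it with the acuteness of $\bK$, and conclude that $\by$ is orthogonal to all of $\bK$, hence to itself.

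First, \cref{kurve}\cref{kurve3} gives $\scal{\bx_{n+1}}{\bx_n-\bx_{n+1}}=0$. Subtracting this from \cref{e:david3} leaves
\begin{equation*}
(\forall n\geq 1)\quad \scal{\by}{\bx_n-\bx_{n+1}}\leq 0 .
\end{equation*}
Next, fix $m\geq 1$ and sum this inequality over $n=m,\ldots,N-1$. The sum telescopes to $\scal{\by}{\bx_m-\bx_N}\leq 0$, that is, $\scal{\by}{\bx_m}\leq\scal{\by}{\bx_N}$. Letting $N\to\infty$ and using $\bx_N\weakly\bzero$ from \cref{e:kurve}, we obtain
\begin{equation*}
(\forall m\geq 1)\quad \scal{\by}{\bx_m}\leq 0 .
\end{equation*}

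For the reverse inequality, note that $\by\in\bK$ and each $\bx_m\in\bK$. Since $\bK$ is acute by \cref{kurve}\cref{kurve4}, $\scal{\by}{\bx_m}\geq 0$. Together with the previous step this gives $\scal{\by}{\bx_m}=0$ for every $m$.

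Finally, we extend this orthogonality to all of $\bK$. The functional $\scal{\by}{\cdot}$ is linear and continuous, so it vanishes on nonnegative combinations of the $\bx_m$ and, by continuity, on their closure, which is $\bK$ by \cref{bK}. Taking $\by\in\bK$ itself yields $\norm{\by}^2=0$, so $\by=\bzero$.

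No step here is a real obstacle. The only point requiring care is the telescoping limit, which relies on the weak convergence $\bx_N\weakly\bzero$ rather than on convergence of the norms, and that weak convergence is already recorded in \cref{e:kurve}.
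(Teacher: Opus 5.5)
Your proof is correct and follows the paper's argument: use \cref{kurve}\cref{kurve3} to turn the hypothesis into monotonicity of $\scal{\bx_n}{\by}$, use $\bx_n\weakly\bzero$ to get $\scal{\bx_n}{\by}\le 0$ for all $n$, and then use $\by\in\bK$ to conclude. The one difference is your appeal to acuteness to upgrade the inequality to $\scal{\by}{\bx_m}=0$, which is harmless but unnecessary. The paper instead extends $\scal{\by}{\bx_m}\le 0$ by nonnegative combinations and continuity to get $\by\in\bK^\ominus$, so $\by\in\bK\cap\bK^\ominus$ already forces $\norm{\by}^2\le 0$.
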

\begin{proof}
In view of \cref{kurve}\cref{kurve3}, we see that \cref{e:david3} is equivalent to
\begin{equation}
\label{e:david4}
(\forall 1\leq n< \infty)\quad
\scal{\bx_n}{\by} \leq \scal{\bx_{n+1}}{\by},
\end{equation}
i.e., the sequence $(\scal{\bx_n}{\by})_{n\geq 1}$ is increasing.
Moreover, $\scal{\bx_n}{\by}\to 0$ by \cref{e:kurve}. 
Combining these two observations yields 
$(\forall n\geq 1)$ $\scal{\bx_n}{\by} \leq 0$; 
hence (recall \cref{bK}),
\begin{equation*}
\by \in \bK^\ominus.
\end{equation*}
On the other hand, $\by\in\bK$ by assumption. 
Altogether, $0\leq \|\by\|^2 = 
\scal{\by}{\by} \leq 0$ and we're done. 
\end{proof}

\begin{lemma}
\label{l:Cesaro}
Let $n\geq 1$, and let $I$ be a nonempty subset of $\{1,\ldots,n\}$. 
Set $\delta := \max\menge{|t_i-t_j|}{i,j\in I}$. 
Then
\begin{equation}
\bigg\|\frac{1}{n}\sum_{k=1}^n\bx_k \bigg\|
\geq \frac{\# I}{n}\rho_\infty \exp(-\delta^2/2)>0.
\end{equation}
\end{lemma}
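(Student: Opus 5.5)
Since all the $\bx_k$ lie in the acute cone $\bK$ (\cref{kurve}\cref{kurve4}), all pairwise inner products $\scal{\bx_i}{\bx_j}$ are nonnegative. The plan is to bound the norm of the mean from below by its inner product with a unit vector aligned with the subfamily indexed by $I$. A natural candidate is $\bz := \bu(c)$, where $c$ is the midpoint of the interval $[\min_{i\in I} t_i,\max_{i\in I} t_i]$; this interval has length $\delta$. Cauchy--Schwarz with $\|\bz\|=1$ gives
\begin{equation*}
\bigg\|\frac{1}{n}\sum_{k=1}^n\bx_k\bigg\| \geq \frac{1}{n}\sum_{k=1}^n \scal{\bx_k}{\bz}.
\end{equation*}

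The first step is to check that every summand is nonnegative. This holds because $\scal{\bx_k}{\bu(c)} = \rho_k\exp(-(t_k-c)^2)\geq 0$ by \cref{e:Kurve} and \cref{xn}. We may therefore discard all indices $k\notin I$.

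The second step handles the indices $i\in I$. For these, $|t_i - c|\leq \delta/2$, and \cref{e:rho2} gives $\rho_i\geq\rho_\infty$. Together these yield
\begin{equation*}
\scal{\bx_i}{\bz}\geq \rho_\infty\exp(-\delta^2/4)\geq \rho_\infty\exp(-\delta^2/2).
\end{equation*}
Summing over $I$ gives the claimed bound, which is in fact slightly stronger than stated. Positivity of the right-hand side is immediate from $\rho_\infty>0$ and $\# I\geq 1$.

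An alternative route, which may be the one the authors intend given the exponent $\delta^2/2$, is to square the norm and use nonnegative cross terms. This gives
\begin{equation*}
\Big\|\sum_k \bx_k\Big\|^2\geq \sum_{i,j\in I}\scal{\bx_i}{\bx_j}\geq (\# I)^2\rho_\infty^2\exp(-\delta^2),
\end{equation*}
and taking square roots yields exactly $\exp(-\delta^2/2)$. Either route works. I do not expect a genuine obstacle; the only point needing care is nonnegativity of all discarded terms, which both approaches get from the Gaussian kernel being positive.
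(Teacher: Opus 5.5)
Your proof is correct. Your ``alternative route'' is exactly the paper's argument: expand $\big\|\frac{1}{n}\sum_{k=1}^n\bx_k\big\|^2$ as a Gram sum, drop the nonnegative terms with $(i,j)\notin I\times I$, and bound each remaining $\scal{\bx_i}{\bx_j}$ below by $\rho_\infty^2\exp(-\delta^2)$ via \cref{kurve}\cref{kurve1}. Then take square roots.

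Your main route is genuinely different. It is a linear rather than quadratic estimate: you test the mean against the unit vector $\bu(c)$, where $c\in\RP$ is the midpoint of the $t$-range of $I$. In general $c$ is not a mesh point.

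What this buys you is the sharper factor $\exp(-\delta^2/4)$. Every $t_i$ with $i\in I$ lies within $\delta/2$ of the single centre $c$, whereas the Gram argument only knows that each pair $t_i,t_j$ is within $\delta$ of each other. The price is that you must leave the sequence. You need the ambient curve $\bu$ off the mesh, and the explicit kernel \cref{e:Kurve} to evaluate $\scal{\bx_k}{\bu(c)}$ and to see that it is nonnegative for every $k$.

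The paper's argument, by contrast, is intrinsic to the iterates. It uses only their pairwise inner products: nonnegativity everywhere and a lower bound on $I\times I$. So it would apply verbatim to any family with those Gram-matrix properties.

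Downstream, the lemma is only used to get a positive lower bound in \cref{ex:harmCes} and \cref{s:bizarre}. Either constant suffices there; your version would just slightly improve the explicit constants, e.g.\ $\rho_\infty\exp(-\delta^2/4)$ in place of $\rho_\infty\exp(-\delta^2/2)$.
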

\begin{proof}
By \cref{kurve}, we have 
\begin{equation*}
(\forall i,j\in I)\quad 
\scal{\bx_i}{\bx_j} \geq \rho_\infty^2\exp(-\delta^2) > 0.
\end{equation*}
Hence 
\begin{align*}
\bigg\|\frac{1}{n}\sum_{k=1}^n\bx_k \bigg\|^2
&= \frac{1}{n^2}\sum_{i,j=1}^n \scal{\bx_i}{\bx_j}
\geq  \frac{1}{n^2}\sum_{i,j\in I} \scal{\bx_i}{\bx_j}
\geq \frac{(\# I)^2}{n^2}\rho_\infty^2\exp(-\delta^2).
\end{align*}
Now take the square root, and we're done.
\end{proof}

\section{Main results}

\label{s:main}

\subsection{Firmly nonexpansive iterates that do not converge strongly}
We are now ready for our first main result, concerning regular iterates. 

\begin{theorem}[a firmly nonexpansive iteration that converges weakly but not strongly] 
\label{t:main1}
Set 
\begin{equation}
\bC := \cconv\{\bx_n\}_{n\geq 2}. 
\end{equation}
Then there exists a firmly nonexpansive operator 
\begin{equation}
\bT\colon \bX\to \bX 
\end{equation}
such that 
\begin{equation}
\cran \bT =\bC,\;\; 
\Fix \bT = \{\bzero\}, \;\;
(\forall n\geq 1)\; \bT\bx_n = \bx_{n+1},
\end{equation}
and 
\begin{equation}
\bT^n\bx_1\weakly \bzero, \quad\text{ but } \quad
\inf_{1\leq n<\infty} \|\bT^n\bx_1\|=\rho_\infty >0.
\end{equation}
\end{theorem}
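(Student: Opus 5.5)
The plan is to define $\bT$ first on the countable set $D:=\{\bx_n\}_{1\leq n\leq\infty}$ (with $\bx_\infty=\bzero$ as in \cref{e:bxinf}) by $\bx_n\mapsto\bx_{n+1}$ and $\bzero\mapsto\bzero$. Next I would check that this map is firmly nonexpansive on $D$, and then extend it to all of $\bX$ while controlling its range.

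Recall that firm nonexpansiveness of a map $\bT$ on $D$ means
\begin{equation*}
(\forall \bx,\by\in D)\quad 0\leq\scal{\bT\bx-\bT\by}{(\bx-\bT\bx)-(\by-\bT\by)}.
\end{equation*}
For $\bx=\bx_m$, $\by=\bx_n$ with $1\leq m<n\leq\infty$, this is exactly \cref{e:schap} of \cref{l:schap}; the case $m=n$ is trivial, and the roles of $\bx,\by$ are symmetric. Hence the map on $D$ is firmly nonexpansive.

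\textbf{Extension.} Here I would invoke the extension theorem for firmly nonexpansive maps. Equivalently, one can extend the monotone operator $\bA:=\bT^{-1}-\Id$, whose graph $\{(\bx_{n+1},\bx_n-\bx_{n+1})\}_n\cup\{(\bzero,\bzero)\}$ is monotone, via the Debrunner--Flor theorem \cite{BC2017}. This yields a maximally monotone extension with $\operatorname{dom}$ contained in $\cconv\{\bx_{n+1}\}_{n\geq1}\cup\{\bzero\}$, and its resolvent is a firmly nonexpansive $\bT\colon\bX\to\bX$ extending the map on $D$ with $\ran\bT\subseteq\cconv(\{\bx_n\}_{n\geq2}\cup\{\bzero\})$. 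Since $\bx_n\weakly\bzero$ and closed convex sets are weakly closed, $\bzero\in\bC$, so $\ran\bT\subseteq\bC$. Conversely, $\cran\bT$ is convex because it is the closure of the domain of a maximally monotone operator, and it contains every $\bx_{n+1}$; hence $\cran\bT\supseteq\bC$, giving $\cran\bT=\bC$. I expect this extension step to be the main obstacle: one has to make sure the version of Kirszbraun/Debrunner--Flor used really confines the range to the closed convex hull of the given range, since otherwise $\cran\bT=\bC$ and the later inclusion $\Fix\bT\subseteq\bK$ fail.

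\textbf{Fixed points.} We have $\bT\bzero=\bzero$. If $\bT\by=\by$, then $\by\in\bC\subseteq\bK$. Firm nonexpansiveness applied to the pair $\bx_n,\by$ gives $0\leq\scal{\bx_{n+1}-\by}{\bx_n-\bx_{n+1}}$ for every $n$. Then \cref{l:david2} yields $\by=\bzero$, so $\Fix\bT=\{\bzero\}$.

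\textbf{Iterates.} By induction, $\bT^n\bx_1=\bx_{n+1}$. By \cref{kurve}, $\bx_{n+1}\weakly\bzero$, and $\|\bx_{n+1}\|=\rho_{n+1}$ decreases strictly to $\rho_\infty>0$ by \cref{e:rho2}. Hence $\inf_{n\geq1}\|\bT^n\bx_1\|=\rho_\infty$.
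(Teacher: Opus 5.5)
Your proposal is correct and follows the paper's proof essentially step for step. You define the map on $\{\bx_n\}_{n\geq 1}\cup\{\bzero\}$, get firm nonexpansiveness from \cref{l:schap}, extend with range localization, take convexity of $\cran\bT$ from maximal monotonicity, and then use \cref{l:david2} for $\Fix\bT=\{\bzero\}$ and \cref{kurve} for the iterates. The extension step you flag is exactly what the paper takes from \cite[Corollary~5]{Bauschke07}, which is the resolvent form of the monotone extension with domain in $\cconv$ of the original domain that you describe; it gives $\ran\bT\subseteq\cconv\ran\bT_0=\bC$. The only point you leave implicit is that the $\bx_n$ are pairwise distinct and nonzero (their norms $\rho_n>0$ strictly decrease), which makes the map well defined.
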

\begin{proof}
Since $(\bx_n)_{n\geq 2}$ lies in $\bC$, which is weakly closed, 
and since $\bx_n \weakly \bzero$ (by \cref{e:kurve}), 
we obtain $\bzero\in \bC$.
Now set 
\begin{equation*}
\bT_0 \colon  \{\bx_n\}_{n\geq 1}\cup\{\bx_\infty\} \to
\{\bx_n\}_{n\geq 2}\cup\{\bx_\infty\}\colon \bx_n\mapsto \bx_{n+1}. 
\end{equation*}
Then $\bT_0$ is a well-defined bijection 
(recall \cref{kurve}\cref{kurve1}), 
with $\Fix \bT_0 = \{\bx_\infty\} = \{\bzero\}$. 
Combining \cref{l:schap} with \cite[Proposition~4.4]{BC2017}, 
we see that 
\begin{equation*}
\text{$\bT_0$ is firmly nonexpansive.}
\end{equation*}
By a refined version of the Kirszbraun-Valentine theorem 
(see \cite[Corollary~5]{Bauschke07}), 
there exists a firmly nonexpansive extension 
$\bT\colon \bX \to \bX$ 
of $\bT_0$, with the extra range localization property 
$\ran \bT \subseteq \cconv \ran \bT_0 = \bC$.
Because $\bT$ extends $\bT_0$, we clearly have 
$\ran \bT_0 \subseteq \ran \bT$ and so $\cran \bT_0 \subseteq \cran \bT$. 
Because $\bT$ is firmly nonexpansive, hence maximally monotone, it follows that $\cran \bT$ is convex 
(see, e.g., \cite[Corollary~21.14]{BC2017}). Altogether,
\begin{equation*}
\cran \bT = \cconv \ran \bT_0 = \bC.
\end{equation*}
Clearly, $\{\bzero\} = \Fix \bT_0 \subseteq \Fix \bT$.
Conversely, let $\by \in \Fix \bT$. 
Then $\by \in \cran \bT = \bC \subseteq \bK$. 
Because $\bT$ is firmly nonexpansive, we must have that 
\cref{e:david3} holds. 
Hence \cref{l:david2} implies that $\by=\bzero$. 
Altogether, $\Fix \bT = \{\bzero\}$. 
Because $\bT$ extends $\bT_0$, we have 
$\bT\bx_n = \bT_0\bx_n = \bx_{n+1}$ for all $n\geq 1$. 
Finally, \cref{e:kurve} shows that 
$\bT^n\bx_1 = \bx_{n+1} \weakly \bzero$ 
but $(\|\bT^n\bx_1\|)_{n\geq 1}$ decreases 
to $\rho_\infty>0$. 
\end{proof}

\begin{remark}[historical comments]
Several comments on \cref{t:main1} are in order.
\begin{enumerate}[label=(\roman*), font=\normalfont]
\item We already pointed out that 
Genel and Lindenstrauss \cite{GL} provided a similar 
construction (see \cref{f:GL}). 
Their proof is much more geometric, 
and it also relies on the Kirszbraun-Valentine extension theorem. 
\item Hundal \cite{Hundal} constructed a halfspace $H$ and a nonempty closed cone $K$ in $\ell^2$ such that $H\cap K\neq\varnothing$, and the sequence generated by iterating 
the composition of the projections $P_KP_H$ fails to converge strongly to $0$, the unique point in $H\cap K$.  While $P_KP_H$ is an averaged nonexpansive mapping, 
it is not firmly nonexpansive 
(see \cite[Lemma~4.3]{BMR}). 
\item 
The first proximal point iteration that fails to converge strongly --- even with flexibility in the parameters --- is due to G\"uler \cite{Guler}. 
In \cite{BMR}, Hundal's example was re-interpreted as the iteration of a proximal (hence firmly nonexpansive) mapping. 
\end{enumerate}

The proofs of the examples constructed by Genel and Lindenstrauss, 
by Hundal, and by G\"uler appear to be significantly more complicated than the proof of \cref{t:main1}. However, G\"uler's construction gives a proximal mapping, rather than just a firmly nonexpansive one. 
\end{remark}

\begin{remark}[$\bT$ viewed as a resolvent]
Consider the operator $\bT$ from \cref{t:main1}. 
Because $\bT$ is firmly nonexpansive, it must be 
the resolvent $J_{\bA} = (\bA+\Id)^{-1}$ of some maximally monotone operator 
$\bA\colon \bX \To \bX$. The Minty parametrization implies that 
\begin{equation}
\{(\bx_{n+1},\bx_n-\bx_{n+1})\}_{n\geq 1} \cup \{(\bzero,\bzero)\}
\subseteq \gra \bA.
\end{equation}
Note that $\bzero \in \bA\bzero$ and $\bx_{n}-\bx_{n+1}\in \bA\bx_{n+1}$.
By \cref{kurve}\cref{kurve3}, 
$\scal{\bx_{n+1}-\bzero}{(\bx_{n} -\bx_{n+1}) - \bzero}
= 0$. 
Hence $\bA$ is not strictly monotone. 
If $\bA$ were paramonotone, then it would follow that 
$\bzero\in \bA\bx_{n+1}$ but this is false because 
$\zer \bA = \Fix \bT = \{\bzero\}$. 
Hence $\bA$ is not paramonotone. Consequently, 
$\bA$ is not a subdifferential operator and $\bT=J_\bA$ cannot be
a proximal mapping. 
\end{remark}

\subsection{Ces\`aro means that do not converge strongly either}

We are now ready for our second main result:
the incarnation of \cref{t:main1} through the harmonic mesh produces
Ces\`aro means that do not converge strongly.

\begin{example}[harmonic mesh and Ces\`aro means]
\label{ex:harmCes}
Suppose that the given mesh increments $(d_n)_{n\geq 1}$ come from the scaled harmonic 
mesh (see \cref{ex:harmesh}): 
let $\delta \in \left]0,1/8\right]$, and assume that 
\begin{equation}
(\forall n\geq 1)\quad d_n = \frac{\delta}{n}.
\end{equation}
For the sequence $(\bx_n)_{n\geq 1}$ defined in \cref{xn}, 
we saw in \cref{t:main1} that $(\bx_n)_{n\geq 1} = 
(\bT^{n-1}\bx_1)_{n\geq 1}$ 
converges weakly to $\bzero$ but 
$\inf_{n\geq 1} \|\bT^{n-1}\bx_1\| = \rho_\infty > 0$.
In fact, we have 
\begin{equation}
\|\bx_1\| > \|\bx_2\|> \cdots > \|\bx_n\|\to  
\rho_\infty = \exp\Big(-\frac{\delta^2\pi^2}{6}\Big) > 0.
\end{equation}
Now consider the Ces\`aro means of $(\bx_n)_{n\geq 1}$, i.e., 
\begin{equation}
\by_n := \frac{1}{n}\sum_{k=1}^n \bx_k = 
\frac{1}{n}\sum_{k=1}^n \bT^{k-1}\bx_1.
\end{equation}
Then $(\by_n)_{n\geq 1}$ converges weakly to $\bzero$ as well; 
moreover, 
\begin{equation}
\inf_{n\geq 1} \|\by_n\| \geq 
\frac12 \cdot \exp\Big(-\frac{\delta^2}{6}\big({\pi^2}+3\big)\Big) 
>0. 
\end{equation}
\end{example}
\begin{proof}
\cref{e:rho2} and, e.g., \cite{Ghosh}  yield
\begin{equation}
\label{e:jackr8}
\rho_\infty = \exp\Big(-\sum_{k=1}^\infty d_k^2\Big) = 
\exp\Big(-\delta^2\sum_{k=1}^\infty \frac{1}{k^2}\Big) 
= \exp\Big(-\frac{\delta^2\pi^2}{6}\Big) > 0.
\end{equation}
We now turn to the Ces\`aro means. 
Note that $\by_1 = \bx_1$ and so  
\cref{e:rho} and \cref{kurve}\cref{kurve1} yield 
\begin{equation}
\label{e:ryobi}
\|\by_1\| = \|\bx_1\| =\rho_1 = 1. 
\end{equation}
Now let $n\geq 2$, and consider the ``upper-half'' index set 
\begin{equation*}
I_n := \big\{\lfloor n/2\rfloor+1,\ldots,n\big\}. 
\end{equation*}
Then\footnote{The formulas involving floor and ceiling functions are 
proved by discussing parity ($n$ is even or $n$ is odd).} 
$\# I_n = \lceil n/2\rceil \geq n/2$.
Let $i,j$ be in $I_n$ with $i\leq j$. 
Then $(j-1)-i+1 = j-i \leq n - (\lfloor n/2\rfloor+1)
= \lfloor (n-1)/2\rfloor$, 
and if $i\leq k\leq j-1$, then 
$\lfloor n/2\rfloor + 1 \leq k$ and so 
$1/k \leq 1/(\lfloor n/2\rfloor + 1)$.
It follows that 
\begin{align*}
t_j-t_i = \delta \sum_{k=i}^{j-1} \frac{1}{k} 
\leq \delta \cdot \lfloor (n-1)/2\rfloor \cdot \frac{1}{\lfloor n/2\rfloor + 1}
<\delta. 
\end{align*}
We now deduce from \cref{l:Cesaro} and \cref{e:jackr8} that 
\begin{equation*}
\|\by_n\| \geq \frac{\# I_n}{n}\rho_\infty \exp(-\delta^2/2)
\geq \frac{1}{2}\cdot \rho_\infty \exp(-\delta^2/2) 
= \frac{1}{2}\cdot \exp\Big(-\frac{\delta^2}{6}\big({\pi^2}+3\big)\Big) > 0,
\end{equation*}
and we're done.
\end{proof}

\begin{remark}
To the best of our knowledge, this is the first example 
of a firmly nonexpansive mapping $\bT$ such that its 
iterates $\bT^n{\bx_1}\weakly 0$, 
but even the corresponding Ces\`aro means 
$\frac{1}{n}\sum_{k=1}^n \bT^{k-1}\bx_1$ 
are bounded away from $\bzero$ for some starting point $\bx_1$.
In view of \cite[Theorem~2.1]{Wittmann}, the mapping $\bT$ is not odd. 
In \cref{s:bizarre}, we will present an even more bizarre example where 
one subsequence stays away from $\bzero$ yet another converges 
to $\bzero$ \emph{strongly}.
\end{remark}

\subsection{Ces\`aro means that do converge strongly}

Our third main result essentially states that sometimes 
Ces\`aro means behave much nicer than straight iterates in the following 
sense:

\begin{theorem}[Ces\`aro means that converge strongly]
\label{t:Cnice}
Suppose that the given mesh increments $(d_n)_{n\geq 1}$ satisfy
\begin{equation}
\label{hrv01}
nd_n\to\infty.
\end{equation}
For the sequence $(\bx_n)_{n\geq 1}$ defined in \cref{xn}, 
we saw in \cref{t:main1} that $(\bx_n)_{n\geq 1} = 
(\bT^{n-1}\bx_1)_{n\geq 1}$ 
converges weakly to $\bzero$ but 
$\inf_{n\geq 1} \|\bT^{n-1}\bx_1\| = \rho_\infty > 0$.
Then the corresponding Cesaro means satisfy 
\begin{equation}
\by_n := \frac{1}{n}\sum_{k=1}^n \bx_k = 
\frac{1}{n}\sum_{k=1}^n \bT^{k-1}\bx_1 \to 0.
\end{equation}
\end{theorem}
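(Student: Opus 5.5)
We will bound $\|\by_n\|^2$ directly through the Gram matrix. By \cref{kurve}\cref{kurve1} and $\rho_k\le 1$,
\begin{equation*}
\|\by_n\|^2=\frac{1}{n^2}\sum_{i,j=1}^n \rho_i\rho_j\exp\big(-(t_i-t_j)^2\big)
\le \frac{1}{n^2}\sum_{i=1}^n\sum_{j=1}^n \exp\big(-(t_i-t_j)^2\big).
\end{equation*}
The inner sum over $j$ is a Gaussian sum over the mesh points $t_1<\cdots<t_n$, so \cref{l:esumineq} is the natural tool. The spacing $t_{j+1}-t_j=d_j$ is at least $d_n$ on $\{1,\ldots,n\}$, since $(d_k)$ is strictly decreasing. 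Also $d_n\le d_1\le 1/8\le 1$. Applying \cref{l:esumineq} with $\alpha=d_n$ therefore gives, for every $i\le n$, the bound $\sum_j\exp(-(t_i-t_j)^2)\le (1+\sqrt{\pi})/d_n$. Hence
\begin{equation*}
\|\by_n\|^2\le \frac{1}{n^2}\cdot n\cdot\frac{1+\sqrt{\pi}}{d_n}=\frac{1+\sqrt{\pi}}{n d_n}.
\end{equation*}
By \cref{hrv01}, the right-hand side tends to $0$.

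The only care point is the choice of separation $\alpha$. Using the minimal spacing $d_n$ over the whole block $\{1,\ldots,n\}$ is exactly what makes the quantity $nd_n$ appear, and the hypothesis \cref{hrv01} is tailored to this. I expect no real obstacle beyond checking the hypotheses of \cref{l:esumineq}: strict monotonicity of $(t_k)$, and $\alpha\in\left]0,1\right]$, which follows from \cref{e:dmesh0} and the monotonicity of $(d_n)$ implied by \cref{e:dmeshnew}. If a finer bound were wanted, one could instead split the indices into blocks where the local spacing is comparable. That is unnecessary here.

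Finally, $\by_n\to\bzero$ strongly is the claim, so the proof is complete once the displayed estimate is in place. The strong limit is consistent with the weak limit $\bzero$ from Baillon's theorem (\cref{f:Baillon}), which also follows directly from $\bx_n\weakly\bzero$.
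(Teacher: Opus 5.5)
Your proposal is correct and follows essentially the same route as the paper. You bound $\|\by_n\|^2$ by the unweighted Gaussian Gram sum using $\rho_k\le 1$, apply \cref{l:esumineq} with $\alpha=d_n$ (valid because $(d_k)$ is strictly decreasing and $d_1\le 1/8$), and conclude from $\|\by_n\|^2\le (1+\sqrt{\pi})/(nd_n)\to 0$.
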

\begin{proof}
Recall that \cref{kurve}\cref{kurve1} gives 
$\scal{\bx_i}{\bx_j} = \rho_i\rho_j\exp(-(t_i-t_j)^2)$, 
and that we also have $\{\rho_n\}_{n\geq 1}\subseteq \left]0,1\right]$ 
by \cref{e:rho1}. 
Hence 
\begin{subequations}
\label{hrv02}
\begin{align}
\|\by_n\|^2
&=\bigg\| \frac{1}{n}\sum_{k=1}^n \bx_k\bigg\|^2
= \frac{1}{n^2}\sum_{i=1}^n\sum_{j=1}^n\scal{\bx_i}{\bx_j}
= \frac{1}{n^2}\sum_{i=1}^n\sum_{j=1}^n\rho_i\rho_j\exp\big(-(t_i-t_j)^2\big)\\
&\leq 
\frac{1}{n^2}\sum_{i=1}^n\sum_{j=1}^n\exp\big(-(t_i-t_j)^2\big).
\end{align}
\end{subequations}
The sequence $(d_n)_{n\geq 1}$ 
is strictly decreasing by \cref{e:dmesh00} and \cref{e:dmeshnew}.
Hence if $k\in\{1,\ldots,n-1\}$, then 
$t_{k+1}-{t_k} = d_k > d_n$.
Moreover, $d_1\leq 1/8$ by 
\cref{e:dmesh0}.
Applying \cref{l:esumineq} with $\alpha=d_n$, we obtain 
\begin{equation*}
\max_{i\in\{1,\ldots,n\}}\sum_{j=1}^n\exp\big(-(t_i-t_j)^2\big) 
\leq \frac{1+\sqrt{\pi}}{d_n};
\end{equation*}
therefore,
\begin{equation}
\label{hrv03}
\sum_{i=1}^n\sum_{j=1}^n\exp\big(-(t_i-t_j)^2\big) 
\leq n\frac{1+\sqrt{\pi}}{d_n}. 
\end{equation}
Combining \cref{hrv02} with \cref{hrv03}, we estimate
\begin{equation*}
\|\by_n\|^2 \leq \frac{1+\sqrt{\pi}}{nd_n},
\end{equation*}
which yields the conclusion.
\end{proof}

\begin{example}
\label{ex:Cnice}
Suppose that 
\begin{equation}
(\forall n\geq 1)\quad d_n = \frac{1}{16n^{3/4}}.
\end{equation}
Then all mesh assumptions \cref{e:dmesh00} and \cref{e:dmesh11} hold. 
Consequently,
$(\bx_n)_{n\geq1}=(\bT^{n-1}\bx_1)_{n\geq1}$ converges weakly
to $\bzero$ and
$\inf_{n\geq 1} \|\bT^{n-1}\bx_1\| = \rho_\infty > 0$.
Moreover,
the sequence of the 
corresponding Ces\`aro means of $(\bx_n)_{n\geq 1}$ satisfies 
\begin{equation}
\by_n := \frac{1}{n}\sum_{k=1}^n \bx_k = 
\frac{1}{n}\sum_{k=1}^n \bT^{k-1}\bx_1 \to 0.
\end{equation}
\end{example}
\begin{proof}
It's clear from the assignment of $(d_n)_{n\geq 1}$ that 
\cref{e:dmesh00} and \cref{e:dmesh0} hold. 
It remains to prove \cref{e:dmeshnew}; or equivalently, \cref{e:dmeshold},
which here turns into 
$16(n+1)^{3/4}-16n^{3/4} \geq 64/(16n^{3/4})$.
In turn, this is equivalent to 
\begin{equation}
\label{hrvgoal}
4(n+1)^{3/4}-4n^{3/4}\geq \frac{1}{n^{3/4}}.
\end{equation}
To see \cref{hrvgoal}, 
set $\varphi(t) := 4t^{3/4}$ for $t>0$.
Then $\varphi'(t) = 3t^{-1/4}$ and 
$\varphi''(t)=-\frac{3}{4}t^{-5/4}<0$. 
Hence $\varphi'$ is strictly decreasing. 
The Mean Value Theorem provides $\eta\in \left]n,n+1\right[$ such that
$\varphi(n+1)-\varphi(n)=\varphi'(\eta)>\varphi'(n+1)$. 
Thus, on the one hand, we obtain 
\begin{equation}
4(n+1)^{3/4}-4n^{3/4} > \frac{3}{(n+1)^{1/4}}. 
\end{equation}
On the other hand, we have the equivalences
$3/(n+1)^{1/4} \geq 1/n^{3/4}$ 
$\Leftrightarrow$ 
$3n^{3/4}\geq (n+1)^{1/4}$ 
$\Leftrightarrow$ 
$81n^3 \geq n+1$, and the last inequality is certainly true.  
Altogether, \cref{hrvgoal} holds.

The ``Consequently'' statement concerning $(\bx_n)_{n\geq 1}$ now follows from 
\cref{t:main1}. 
Finally, the ``Moreover'' statement is clear from \cref{t:Cnice}. 
\end{proof}

\section{Bizarre Ces\`aro means: a subsequence that stays away from zero
and another that doesn't}

\label{s:bizarre}

In this section, we present a construction of a mesh 
whose corresponding 
firmly nonexpansive mapping $\bT$ has somewhat bizarre 
properties. 
It all starts with our choice of block widths: 
\begin{empheq}[box=\mybluebox]{equation}
\label{e:wk}
(w_k)_{k\geq 1}  := (k+1)_{k\geq 1} = (2,3,4,\ldots).
\end{empheq}

We will iteratively construct a sequence of indices 
$(i(k))_{k\geq 1}$ with 
\begin{empheq}[box=\mybluebox]{equation}
1 =: i(1) < i(2) < \cdots,
\;\;\text{and blocks}\;\;
I_k := \{i(k), i(k)+1, \ldots, i(k+1)-1\}.
\end{empheq}

\subsection{The first block}

\label{ss:k=1n=1}

Let us now explain what happens in the first block $I_1$.
We let\footnote{Following common usage in computer science,
$I_1 \gets \{1\}$ means that $I_1$ is currently $\{1\}$ but this 
set may be updated as the construction proceeds. In contrast, 
$i(1) := 1$ signifies that $i(1)$ is defined to be $1$ and will stay
that way.}
\begin{equation}
i(1) := 1,\;\;
I_1 \gets \{1\},\;\;
Q_1 \in \NN\cap \left[8,+\infty\right[. 
\end{equation}
Note that we only know $i(1) = \min I_1$ and the eventually final block $I_1$ 
will be constructed by enlarging $I_1$. 
Right now, $I_1$ is the current candidate for the first block. 
We now consider the first index $n$ in the first candidate block: 
\begin{equation}
\label{e:firstn}
n \gets i(1) = 1\in I_1, \;\;
t_1 := 0,\;\;
d_1 := \frac{1}{Q_1} \leq \frac{1}{8}.
\end{equation}
We now set $k \gets 1$, indicating the block counter. 
We are ready to move from $n$ to $n+1$, which will be explained
in the next subsection.

\subsection{Deciding on the fate of $n+1$ given $n\in I_k$}

Now suppose that 
\begin{subequations}
\label{e:knknown}
\begin{equation}
\label{e:i(k)known}
  \text{$k$ and $Q_k$ are  given, $i(k)$ is known, $I_k$ is the current candidate block, 
  }
\end{equation}
and that 
\begin{equation}
\label{e:ninJk}
\text{$n\in I_k$ is given, and we know $t_n$ and $d_n$.}
\end{equation}
\end{subequations}
(This is definitely true for $k=1$ and $n=i(1)=1$, see 
\cref{ss:k=1n=1}.)
We always set 
\begin{equation}
\label{e:tn+1}
t_{n+1} := t_n + d_n.
\end{equation}
Next, 
we compare $t_{n+1}$ to $t_{i(k)} + w_k$.

\textbf{Case~1 (staying in the block):} 
$t_{n+1} < t_{i(k)} + w_k$.\\
(For $k=1$, this means $t_{n+1} < 2$, which is definitely true for $n=1$ 
because $t_2 = t_1+d_1 = 0 + 1/Q_1 \leq 1/8$.)
We then enlarge the candidate block and update the mesh increment 
according to 
\begin{equation}
\label{e:stayn+1}
I_{k} \gets I_k \cup \{n+1\},\;\;
d_{n+1} := \frac{d_n}{1+64 d_n^2}. 
\end{equation}
Keeping $k$ unchanged, we now update
\begin{equation}
n \gets n+1,
\end{equation}
and return to \cref{e:knknown}. 

\textbf{Case~2 (leaving the block):}
$t_{n+1} \geq t_{i(k)} + w_k$.\\
This condition signals that $n$ is the last index of $I_k$, 
and $n+1$ is the first index of $I_{k+1}$:
\begin{equation}
I_k := \{i(k),\ldots,n\} \;\;\text{is complete},\quad i(k+1) := n+1, \quad I_{k+1} \gets \{i(k+1)\}.
\end{equation}
We now update the integer parameter $Q_{k+1}$ 
by picking an integer such that 
\begin{equation}
\label{e:Qk+1}
Q_{k+1} \geq \max\bigg\{i(k+1), 2^{k+2}w_{k+1}, 
\frac{1 + 64d^2_{i(k+1)-1} }{d_{i(k+1)-1}} \bigg\}. 
\end{equation}
Note that $n+1\in I_{k+1}$, that 
we know $t_{n+1}$ (from \cref{e:tn+1}), 
but this time we update 
\begin{equation}
\label{e:Qk+1'}
d_{n+1} := \frac{1}{Q_{k+1}}. 
\end{equation}
We now update both counters 
\begin{equation}
k\gets k+1, \quad n\gets n+1
\end{equation}
before returning to \cref{e:knknown}.

We have provided well-defined update rules. 
What is not yet resolved is whether we ever leave the first block, and 
whether the so-constructed mesh satisfies all required properties. 
We tackle these questions next. 

\subsection{Escaping a block and guaranteeing that $t_n\to\infty$}

Suppose that $n$ and $n+1$ are in $I_k$. By the condition 
for being in the block 
(see \textbf{Case~1} above),
we have 
\begin{equation}t_n < t_{i(k)} + w_k.
\end{equation}
By \cref{e:stayn+1} and \cref{e:firstn}, we have 
\begin{equation}
d_{n+1} = d_n/(1+64 d_n^2) < d_n < \cdots 
< d_{i(k)} = \frac{1}{Q_k},
\end{equation}
and 
(analogously to our discussion around \cref{e:dntninc})
\begin{equation}
\frac{1}{d_{n}} -64 t_n = \frac{1}{d_{i(k)}} - 64 t_{i(k)} = Q_k - 64 t_{i(k)}.
\end{equation}
Hence altogether
\begin{equation}
\label{e:jackr0}
Q_k \leq \frac{1}{d_n} = Q_k + 64(t_n-t_{i(k)}) < Q_k + 64 w_k.
\end{equation}
Thus 
\begin{equation}
\label{e:jackr1}
0<\frac{1}{Q_k+64w_k} < d_n \leq \frac{1}{Q_k}.
\end{equation}
This lower bound, which is valid for all $n\in I_k$ and independent of $n$, 
guarantees that we escape block $k$: indeed, if $n_k$ is the smallest index in $I_k$ 
such that 
$t_{n_k+1} = t_{i(k)} + \sum_{m=i(k)}^{n_k} d_m \geq t_{i(k)}+w_k$, 
then $I_k = \{i(k),\ldots,n_k\}$, $i(k+1) = n_k+1$, and 
\begin{equation}
t_{n_k+1} \geq t_{i(k)} + w_k \geq w_k = k+1 \to\infty 
\quad\text{as $k\to\infty$.}
\end{equation}
Because $(t_n)_{n\geq 1}$ is clearly strictly increasing, 
we obtain 
\begin{equation}
\lim_{n\to\infty} t_n = \infty.
\end{equation}

\subsection{Square summability of the mesh increments}

We have, in view of \cref{e:jackr1} and \textbf{Case~1} ($i(k+1)-1\in I_k$), 
that 
\begin{equation}
w_k \leq \sum_{n\in I_k} d_n = t_{i(k+1)} - t_{i(k)} 
= d_{i(k+1)-1} + (t_{i(k+1)-1} - t_{i(k)}) 
<\frac{1}{Q_k}  + w_k.
\end{equation}
Hence if ${n}\in I_k$ and thus $d_n\leq 1/Q_k$ by \cref{e:jackr1} , we get
\begin{equation}
\sum_{n\in I_k} d_n^2 \leq \frac{1}{Q_k}\sum_{n\in I_k} d_n
< \frac{1}{Q_k^2}+ \frac{w_k}{Q_k}. 
\end{equation}
On the other hand, from \cref{e:Qk+1}, we get 
$Q_k \geq 2^{k+1}w_k \geq 2^{k+1}$ and so 
\begin{equation}
\sum_{n\in I_k} d_n^2 \leq \frac{1}{4^{k+1}} + \frac{1}{2^{k+1}}.
\end{equation}
Summing this over $k$ gives the summability condition\footnote{The geometric series upper bound can be used to obtain a positive lower bound for $\rho_\infty$ if needed.} 
$\sum_n d_n^2 <\infty$.

\subsection{The mesh satisfies all assumptions}

Combining the previous two subsections, we've verified \cref{e:dmesh00}. 
Next, we saw \cref{e:dmesh0} already in \cref{e:firstn}. 
If $n$ and $n+1$ belong to $I_k$, then \cref{e:stayn+1} yields 
\cref{e:dmeshnew}. It remains to consider the case 
when $n\in I_k$ and $n+1\in I_{k+1}$. 
Then $n=i(k+1)-1$ and $n+1 = i(k+1)$; thus, 
\cref{e:Qk+1'} and \cref{e:Qk+1} yield
\begin{equation}
d_{n+1} = \frac{1}{Q_{k+1}} \leq 
\frac{d_{i(k+1)-1}}{1+64d^2_{i(k+1)-1}} = 
\frac{d_{n}}{1+64d^2_{n}},
\end{equation}
which is again \cref{e:dmeshnew}. 

Altogether, we've shown that the mesh satisfies 
\cref{e:dmesh00} and \cref{e:dmesh11}. 
In particular, the conclusion of \cref{t:main1} holds true. 

\subsection{Towards the Ces\`aro means}

Now consider the Ces\`aro means of $(\bx_n)_{n\geq 1}$:
\begin{equation}
\by_n := \frac{1}{n}\sum_{k=1}^n \bx_k = 
\frac{1}{n}\sum_{k=1}^n \bT^{k-1}\bx_1.
\end{equation}
We know that $(\by_n)_{n\geq 1}$ converges weakly to $\bzero$, 
and in the next two subsections, we obtain the last 
main result \textbf{R4} announced in \cref{s:intro}.

\subsection{A large subsequence of Ces\`aro means}

Consider the block $I_k = \{i(k),\ldots,i(k+1)-1\}$. 
We have $t_{i(k)}<t_{i(k)}+w_k$ and
$t_{i(k+1)}\geq t_{i(k)}+w_k = t_{i(k)} + k+1 \geq t_{i(k)}+2$. 
Hence the following is well-defined: 
\begin{equation}
j(k) := \min\menge{n\in I_k}{t_n \geq t_{i(k)}+1}
\;\;\text{and}\;\;
J_k := \{i(k),\ldots,j(k)\}\subseteq I_k \cap \{1,2,\ldots,j(k)\}. 
\end{equation}
The corresponding mesh values start with 
$t_{i(k)}$. By definition $t_{j(k)-1}<t_{i(k)}+1$. 
Hence 
\begin{equation}
\label{e:jackr2}
t_{j(k)} = t_{j(k)-1} + d_{j(k)-1} < t_{i(k)}+1 + 1/Q_k 
\leq t_{i(k)}+1+1/8 = t_{i(k)}+9/8. 
\end{equation}
Now let $i,j$ be in $J_k$. 
From \cref{e:jackr2}, 
\begin{equation}
\label{e:jackr11}
(\forall i,j\in J_k)\quad |t_i-t_j| \leq t_{j(k)} - t_{i(k)} < 9/8.
\end{equation}

We now turn to the size of $J_k$.
First, we have the lower bound 
\begin{equation}
\label{e:Jklow}
\# J_k \geq Q_k
\end{equation}
because each mesh increment is at most $1/Q_k$. 

If $t_n < t_{i(k)}+1$, then \cref{e:jackr0} yields 
$1/d_n = Q_k+64(t_n-t_{i(k)})<Q_k+64$ and so 
\begin{equation}
d_n>\frac{1}{Q_k+64}. 
\end{equation}
Hence after at most $Q_k+64$ steps, we must have advanced from $t_{i(k)}$ to a 
mesh point exceeding $t_{i(k)}+1$. 
So we get the upper bounds 
\begin{equation}
\label{e:Jkup}
\# J_k \leq Q_k+65
\;\;\text{and}\;\; j(k) \leq i(k)+Q_k+64.
\end{equation}
We deduce from \cref{e:Jklow} and \cref{e:Jkup} that
\begin{align}
\label{e:jackr10}
\frac{\# J_k}{j(k)} \geq \frac{Q_k}{i(k)+Q_k+64}
\geq \frac{Q_k}{2Q_k+64}. 
\end{align}

Combining \cref{e:jackr11}, \cref{e:jackr10}, and \cref{l:Cesaro}, 
we obtain 
\begin{equation}
\|\by_{j(k)}\| \geq \frac{Q_k}{2Q_k+64} \cdot \rho_\infty \exp(-(9/8)^2/2)
\end{equation}
and therefore (because $Q_k\to\infty$) 
\begin{equation}
\varlimsup_{n\to\infty} \|\by_n\| \geq 
\varlimsup_{k\to\infty} \|\by_{j(k)}\| \geq \frac{\rho_\infty}{2}\exp(-81/128)>0.
\end{equation}

\subsection{A small subsequence of Ces\`aro means}

In this subsection, 
we again consider the block $I_k = \{i(k),\ldots,i(k+1)-1\}$, 
but this time we set for convenience 
\begin{equation}
j(k) := \max I_k = i(k+1)-1 \;\;\text{so that}\;\; I_k = \{i(k),\ldots,j(k)\}.
\end{equation}
Note that 
\begin{equation}
\# I_k = j(k)-i(k)+1 = i(k+1)-i(k). %\geq w_k = k+1 \to\infty.
\end{equation}
We now consider the Ces\`aro means over the block $I_k$ alone:
\begin{equation}
\label{e:zesaro}
\bz_k := \frac{1}{\# I_k}\sum_{n\in I_k} \bx_n. 
\end{equation}
Because $i(k+1)$ is the first index in block $I_{k+1}$, 
it follows from the definition of leaving block $I_k$ 
(see \textbf{Case~2} above) that 
\begin{equation}
\sum_{n\in I_k} d_n 
=t_{j(k)+1} - t_{i(k)} = t_{i(k+1)} - t_{i(k)} \geq w_k. 
\end{equation}
Recalling that $d_n\leq 1/Q_k$ for $n\in I_k$ (see \cref{e:jackr1}), 
we conclude that 
$w_k \leq \sum_{n\in I_k} d_n \leq (\# I_k)/Q_k$ and thus 
\begin{equation}
\label{e:jackr12}
\# I_k \geq w_k Q_k \geq w_k i(k). 
\end{equation}
Next, we consider $t_n,t_{n+1}$, where $n,n+1$ are both in $I_k$. 
By \cref{e:jackr1}, we have 
\begin{equation}
d_n = t_{n+1}-t_n > \frac{1}{Q_k+64w_k} =: \alpha_k.
\end{equation}
Now $Q_k+64w_k \geq 8+64\cdot 2 = 136$, so $\alpha_k \leq 1/136<1$. 
It thus follows from \cref{l:esumineq} that 
\begin{equation}
\max_{i\in I_k} \sum_{j\in I_k}\exp\big(-(t_i-t_j)^2\big) \leq 
\frac{{1}+\sqrt{\pi}}{\alpha_k} = \big({1}+\sqrt{\pi}\big)\big({Q_k+64w_k}\big). 
\end{equation}
Using \cref{kurve}\cref{kurve1}, \cref{e:rho2}, \cref{e:jackr12}, 
and the facts that 
$w_k=k+1\to\infty$ and $Q_k \geq 2^{k+1}w_k\to \infty$, 
we obtain 
\begin{subequations}
\begin{align}
\|\bz_k\|^2
&=\frac{1}{(\# I_k)^2}\sum_{i,j\in I_k}\scal{\bx_i}{\bx_j}
=\frac{1}{(\# I_k)^2}\sum_{i,j\in I_k}\rho_i\rho_j\exp\big(-(t_i-t_j)^2\big)\\
&\leq\frac{1}{(\# I_k)^2}\sum_{i,j\in I_k}\exp\big(-(t_i-t_j)^2\big)
\leq\frac{1}{\# I_k} ({1}+\sqrt{\pi})(Q_k+64w_k) \\
&\leq \frac{1}{w_kQ_k} ({1}+\sqrt{\pi})(Q_k+64w_k)  = 
({1}+\sqrt{\pi})\bigg(\frac{1}{w_k} + \frac{64}{Q_k}\bigg)\\
&\to 0.
\end{align}
\end{subequations}
Hence 
\begin{equation}\label{e:zzero}
\|\bz_k\| \to 0.
\end{equation}
We now consider the (full) Ces\`aro mean
\begin{subequations}
\begin{align}
\by_{j(k)} 
&= \frac{1}{j(k)}\sum_{n=1}^{j(k)} \bx_n
= \frac{1}{j(k)}\sum_{n=1}^{i(k)-1} \bx_n + 
\frac{1}{j(k)}\sum_{n\in I_k} \bx_n
= \frac{1}{j(k)}\sum_{n=1}^{i(k)-1} \bx_n + 
\frac{\# I_k}{j(k)} \frac{1}{\# I_k}\sum_{n\in I_k} \bx_n\\
&= \frac{1}{j(k)}\sum_{n=1}^{i(k)-1} \bx_n + \frac{\# I_k}{j(k)}\bz_k.
\end{align}
\end{subequations}
Using $\|\bx_n\|\leq 1$ for all $n$, \cref{e:zesaro}, the triangle inequality, 
\cref{e:jackr12},
and \cref{e:zzero}, 
we have 
\begin{subequations}
\begin{align}
\|\by_{j(k)}\| 
&\leq 
\frac{i(k)-1}{j(k)} + \frac{\# I_k}{j(k)}\|\bz_k\|
\leq 
\frac{i(k)-1}{\# I_k} + \frac{j(k)-i(k)+1}{j(k)}\|\bz_k\|\\
&< \frac{i(k)}{\# I_k} + \|\bz_k\| \leq \frac{1}{w_k} + \|\bz_k\|\\
&\to 0.
\end{align}
\end{subequations}
Therefore, 
\begin{equation}
0\leq \varliminf_{n\to\infty} \|\by_n\| \leq 
\varliminf_{k\to\infty} \|\by_{j(k)}\| = 0.
\end{equation}

\subsection{Summary}

\label{ss:sumbiz}

To sum up, we have presented an example of a firmly nonexpansive mapping 
$\bT \colon \bX\to \bX$ with $\Fix \bT=\{\bzero\}$
such that for some $\bx_1\in \bX$, the sequence 
of Ces\`aro means defined by 
\begin{equation}
\by_n = \frac{1}{n}\sum_{k=1}^n \bT^{k-1}\bx_1
\end{equation}
satisfies 
\begin{equation}
\by_n \weakly \bzero,\;\; \varliminf_{n\to\infty} \|\by_n\| = 0,\;\; 
\text{ and } \;\;\varlimsup_{n\to\infty} \|\by_n\| > 0.
\end{equation}

\subsection*{Acknowledgments}
The authors acknowledge the usage of \texttt{ChatGPT~5.5} leading eventually to  
the realization of the explicit curve $\bu$ in \cref{ss:L2} and to 
the mesh in \cref{s:bizarre}. 
The research of HHB was partially supported by a Discovery Grant
of the Natural Sciences and Engineering Research Council of
Canada. 

\noindent
\textbf{Author contributions}
Both authors contributed equally to this manuscript.

\noindent
\textbf{Funding}
HHB is partially supported by a Discovery Grant of the Natural Sciences 
and Engineering Research Council of Canada.

\noindent
\textbf{Data Availability Statement}
No datasets were generated or analyzed during the current study.

\subsection*{Declarations}

\noindent
\textbf{Ethics approval and consent to participate}
Not applicable.

\noindent 
\textbf{Conflict of interest}
The authors declare no conflict of interest.


\begin{thebibliography}{999}
\seppthree
%\small


\bibitem{Baillon75}
J.B.\ Baillon: Un th\'eor\`eme de type ergodic pour les 
contractions non lin\'eaires dans un espace de Hilbert, 
\emph{Comptes Rendus de l'Acad\'emie des Sciences. 
S\'erie~I. Math\'ematique}~280 (1975), 1511--1514.

\bibitem{Baillon76}
J.B.\ Baillon: Quelques propri\'et\'es de convergence 
asymptotique pour les contractions impaires, 
\emph{Comptes Rendus de l'Acad\'emie des Sciences. S\'erie~I. Math\'ematique}~283 (1976), 587--590. 

\bibitem{Bauschke07}
H.H.\ Bauschke: 
Fenchel duality, Fitzpatrick functions and the extension of 
firmly nonexpansive mappings, 
\emph{Proceedings of the American Mathematical Society}~135 (2007), 135--139. 
\url{https://doi.org/10.1090/S0002-9939-06-08770-3}


\bibitem{BC2017}
H.H.\ Bauschke and P.L.\ Combettes:
\emph{Convex Analysis and Monotone Operator Theory in Hilbert Spaces},
2nd edition, Springer, 2017.
\url{https://doi.org/10.1007/978-3-319-48311-5}

\bibitem{BMR}
H.H.\ Bauschke, E.\ Matou\v{s}kov\'a, and S.\ Reich:
Projection and proximal point methods: convergence results and counterexamples, \emph{Nonlinear Analysis}~56 (2004), 715--738. 
\url{https://doi.org/10.1016/j.na.2003.10.010}


\bibitem{BL}
Y.\ Benyamini and J.\ Lindenstrauss:
\emph{Geometric Nonlinear Functional Analysis (Volume~1)}, 
American Mathematical Society, 2000. 



\bibitem{DM}
L.\ Debnath and P.\ Mikusi\'nski:
\emph{Introduction to Hilbert Spaces with Applications}, 
3rd edition, Academic Press, 2005. 

\bibitem{GL}
A.\ Genel and J.\ Lindenstrauss:
An example concerning fixed points,
\emph{Israel Journal of Mathematics} 22 (1975), 81--86. 
\url{https://doi.org/10.1007/BF02757276}

\bibitem{Ghosh}
S.\ Ghosh:
The Basel Problem, arXiv manuscript, 2021.
\url{https://arxiv.org/abs/2010.03953}

\bibitem{Guler}
O.\ G\"uler: 
On the convergence of the proximal point algorithm for convex
minimization, \emph{SIAM Journal on Control and Optimization}~29 (1991), 403--419.
\url{https://doi.org/10.1137/0329022}

\bibitem{Hundal}
H.S.\ Hundal:
An alternating projection that does not converge in norm,
\emph{Nonlinear Analysis}~57 (2004), 35--61. 
\url{https://doi.org/10.1016/j.na.2003.11.004}

\bibitem{Knapp}
A.W.\ Knapp:
\emph{Basic Real Analysis},
digital second edition, Project Euclid, 2018.
\url{https://doi.org/10.3792/euclid/9781429799997}

\bibitem{KrengelLin}
U.\ Krengel and M.\ Lin: 
Order preserving nonexpansive operators in $L_1$, 
\emph{Israel Journal of Mathematics}~58 (1987), 170--192. 
\url{https://doi.org/10.1007/BF02785675}

\bibitem{Kreyszig}
E.\ Kreyszig:
\emph{Introductory Functional Analysis with Applications}, 
John Wiley \& Sons, 1989. 


\bibitem{Reich78}
S.\ Reich:
Almost convergence and nonlinear ergodic theorems,
\emph{Journal of Approximation Theory}~24 (1978), 269--272.
\url{https://doi.org/10.1016/0021-9045(78)90012-6}

\bibitem{RieszNagy43}
F.\ Riesz and B.\ Sz.-Nagy:
\"Uber Kontraktionen des Hilbertschen Raumes,
\emph{Acta Universitatis Szegediensis. Acta Scientiarum Mathematicarum}~10 (1943), 202--205.
\url{https://real.mtak.hu/213123/1/math_010_202-205.pdf} 

\bibitem{RieszNagy}
F.\ Riesz and B.\ Sz.-Nagy:
\emph{Functional Analysis},
Dover, 1990. 


\bibitem{SHS}
I.\ Steinwart, D.\ Hush, and C.\ Scovel:
An explicit description of the reproducing kernel Hilbert spaces of 
Gaussian RBF kernels,
\emph{IEEE Transactions on Information Theory}~52 (2006), 4635--4643.
\url{https://doi.org/10.1109/TIT.2006.881713}

\bibitem{vN}
J.\ von Neumann:
Proof of the quasi-ergodic hypothesis, 
\emph{Proceedings of the National Academy of Sciences}~18 (1932), 70--82.
\url{https://doi.org/10.1073/pnas.18.1.70}

\bibitem{Wittmann}
R.\ Wittmann:
Mean ergodic theorems for nonlinear operators,
\emph{Proceedings of the American Mathematical Society}~108 (1990),
781--788. 
\url{https://www.jstor.org/stable/2047801}

\end{thebibliography}
\end{document}